\newcommand\cyr{%
 \renewcommand\rmdefault{wncyr}%
 \renewcommand\sfdefault{wncyss}%
 \renewcommand\encodingdefault{OT2}%
\normalfont\selectfont} \DeclareTextFontCommand{\textcyr}{\cyr}
\newtheorem{theorem}{Theorem}
\newtheorem{lemma}[theorem]{Lemma}
\newtheorem{corollary}[theorem]{Corollary}
\newtheorem{conjecture}[theorem]{Conjecture}
\def\Z{\mathbb Z}
\def\mod{\operatorname{mod}}
\newtheorem*{cor8}{Corollary 8}
\newtheorem*{thm4}{Theorem 4}
\newtheorem*{thm5}{Theorem 5}
\long\def\symbolfootnote[#1]#2{\begingroup%
\def\thefootnote{\fnsymbol{footnote}}\footnote[#1]{#2}\endgroup}
\begin{document}

\title{\textbf{A Variational Barban-Davenport-Halberstam Theorem}}

\author{Allison Lewko \thanks{Supported by a Microsoft Research PhD Fellowship.}
\and Mark Lewko}

\date{}

\maketitle

\begin{abstract} We prove variational forms of the Barban-Davenport-Halberstam Theorem and the large sieve inequality. We apply our result to prove an estimate for the sum of the squares of prime differences, averaged over arithmetic progressions.
\end{abstract}

\section{Introduction}
The prime number theorem implies the asymptotic $\psi(x) \sim x$, while the Riemann hypothesis predicts a bound of $|\psi(x) - x| \ll_\epsilon x^{\frac{1}{2} + \epsilon}$ on the error term. This extends naturally to arithmetic progressions, where the asymptotic $\psi(x;q,a) \sim \frac{x}{\phi(q)}$ holds for all coprime $a$ and $q$.
We recall that
\[\psi(x;q,a) := \sum_{\substack{n \leq x \\ n \equiv a \mod q}} \Lambda(n).\]
Under the Generalized Riemann hypothesis, one obtains the error bound $\left|\psi(x;q,a)-\frac{x}{\phi(q)}\right|\ll_\epsilon x^{\frac{1}{2} + \epsilon}$. The stronger bound of $\ll_\epsilon x^{\frac{1}{2} +\epsilon}\phi(q)^{-\frac{1}{2}}$ is also conjectured. (For further definitions, see Section \ref{sec:prelim}. For background material, see \cite{D}.)

An unconditional bound on the averaged error term for this is provided by the Barban-Davenport-Halberstam Theorem \cite{D}, which states:
\begin{theorem}\label{thm:BHD} (Barban-Davenport-Halberstam)
Let $A > 0$. For all positive real numbers $x$ and $Q$ satisfying $x (\log(x))^{-A} \leq Q \leq x$,
\[\sum_{q \leq Q} \sum_{\substack{a\leq q \\ (a,q)=1}} \left(\psi(x;q,a)-\frac{x}{\phi(q)}\right)^2 \ll_A xQ\log(x).\]
\end{theorem}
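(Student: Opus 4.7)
The plan is to follow the classical two-part proof of Barban-Davenport-Halberstam, combining the Siegel-Walfisz theorem for small moduli with the large sieve inequality for larger moduli. Fix a parameter $B = B(A) \geq A+2$ and set $Q_0 = (\log x)^B$. Moduli $q \leq Q_0$ are handled pointwise via Siegel-Walfisz, which yields $\psi(x;q,a) - x/\phi(q) \ll_C x(\log x)^{-C}$ uniformly for any fixed $C$. Squaring and summing over the $O(Q_0^2)$ pairs $(q,a)$ with $q \leq Q_0$ contributes at most $O_B\bigl(x^2 (\log x)^{2B-2C}\bigr)$, and choosing $C$ large enough makes this negligible compared to $xQ\log x$ whenever $Q \geq x(\log x)^{-A}$.

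For moduli $Q_0 < q \leq Q$, orthogonality of Dirichlet characters modulo $q$ yields
\[\psi(x;q,a) - \frac{x}{\phi(q)} = \frac{1}{\phi(q)}\sum_{\chi \neq \chi_0} \overline{\chi(a)}\, \psi(x,\chi) + O\!\left(\frac{|\psi(x)-x| + (\log x)(\log q)}{\phi(q)}\right),\]
where $\psi(x,\chi) := \sum_{n \leq x} \Lambda(n) \chi(n)$, the sum runs over non-principal characters modulo $q$, and the error is controlled by the prime number theorem (the $(\log x)(\log q)$ term accounts for the difference between $\psi(x,\chi_0)$ and $\psi(x)$). Parseval's identity applied to the inner sum over residues $a$ coprime to $q$ then reduces the problem to estimating
\[\sum_{Q_0 < q \leq Q} \frac{1}{\phi(q)} \sum_{\chi \neq \chi_0} |\psi(x,\chi)|^2.\]

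Each non-principal character $\chi$ modulo $q$ is induced by a unique primitive character $\chi^*$ modulo $d$ with $d \mid q$ and $d > 1$, and the substitution $\psi(x,\chi) \to \psi(x,\chi^*)$ costs only $O((\log qx)^2)$. Interchanging the order of summation expresses the double sum above as
\[\sum_{1 < d \leq Q} \Biggl(\,\sum_{\chi^* \text{ prim mod } d} |\psi(x,\chi^*)|^2\Biggr) \Biggl(\sum_{\substack{q \leq Q \\ d \mid q}} \frac{1}{\phi(q)}\Biggr),\]
in which the inner sum over $q$ is $O(\log(2Q/d)/\phi(d))$. The key analytic input is now the large sieve inequality applied with $a_n = \Lambda(n)$, giving
\[\sum_{d \leq T} \frac{d}{\phi(d)} \sum_{\chi^* \text{ prim mod } d} |\psi(x,\chi^*)|^2 \ll (T^2 + x)\, x \log x \qquad (T \geq 1).\]
Writing $1/\phi(d) = (1/d)(d/\phi(d))$ and performing Abel summation over $Q_0 < d \leq Q$ against this bound produces an estimate of order $xQ\log x$.

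The main obstacle is controlling the interaction between the two regimes. If one applied the large sieve all the way down to $d = 1$, the $x$ term in $(T^2 + x)$ would generate a contribution of order $x^2 \log x$, exceeding the target whenever $Q = o(x)$; this is exactly why the small-$q$ regime is peeled off and handled via Siegel-Walfisz. The ineffective dependence of the Siegel-Walfisz constant on $C$ accounts for the ineffective $A$-dependence of the implied constant in the theorem, and taking $B = B(A)$ sufficiently large ensures the two estimates combine to yield $\ll_A xQ\log(x)$ uniformly across $x(\log x)^{-A} \leq Q \leq x$.
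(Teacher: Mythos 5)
The paper does not actually prove Theorem~\ref{thm:BHD}; it cites it directly from Davenport~\cite{D} as known background, and the paper's own contributions are the variational strengthenings (Theorems~\ref{thm:VarBHD} and~\ref{thm:VarBHD2}). Your sketch follows the classical proof from Davenport, which is precisely the source the paper relies on, and the architecture — Siegel--Walfisz for small moduli, orthogonality of Dirichlet characters, reduction to primitive characters, and the large sieve with dyadic/Abel summation for large moduli — is the correct one.

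There is, however, a small structural gap you should close. Peeling off moduli $q \leq Q_0$ and treating them pointwise via Siegel--Walfisz does not automatically remove small \emph{primitive} conductors $d \leq Q_0$ from the regime $q > Q_0$: a large modulus $q$ can carry a non-principal character induced by a primitive character of small conductor $d \mid q$. After you pass to primitive characters and interchange, the resulting sum runs over $1 < d \leq Q$, and your Abel summation against the large sieve only covers $Q_0 < d \leq Q$. The range $1 < d \leq Q_0$ is left unaddressed and requires a second invocation of Siegel--Walfisz, now applied to $\psi(x,\chi^*)$ for primitive $\chi^*$ of conductor $d \leq Q_0$, which yields a factor $x\exp(-c\sqrt{\log x})$ rendering that contribution negligible. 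Your closing paragraph shows you understand the conceptual obstruction (one cannot run the large sieve down to $d=1$ because of the $x$ term in $T^2 + x$), but identifying ``the small-$q$ regime'' with ``the small-$d$ regime'' glosses over the fact that these are different sets and that the small-$d$ piece survives inside the large-$q$ computation. With that explicit extra Siegel--Walfisz step inserted, the remaining ingredients — the $O((\log x)(\log q))$ error from the principal character, the $O((\log qx)^2)$ cost of replacing $\chi$ by $\chi^*$, the bound $\sum_{d\mid q,\, q\le Q}1/\phi(q)\ll \log(2Q/d)/\phi(d)$, $\sum_{n\le x}\Lambda(n)^2\ll x\log x$, and the dyadic estimate producing $Qx\log x + x^2(\log x)^2/Q_0$ — are correct and assemble to the stated bound.
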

We note that this holds also for the quantity $\theta(x; q,a)$, since the differences between $\psi(x;q,a)$ and $\theta(x;q,a)$ are of lower order.

An even stronger bound is due to Montgomery \cite{M}, refining work of Uchiyama \cite{U}, (see also the refinement of Hooley \cite{H}):
\begin{theorem}\label{thm:MH}
Let $A > 0$. For all positive real numbers $x$ and $Q$ satisfying $x (\log(x))^{-A} \leq Q \leq x$,
\[\sum_{q\leq Q} \max_{y \leq x} \sum_{\substack{a\leq q\\(a,q)=1}} \left(\psi(y;q,a)- \frac{y}{\phi(q)}\right)^2 \ll_A xQ\log(x).\]
\end{theorem}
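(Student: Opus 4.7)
The plan is to run the proof of Theorem \ref{thm:BHD} but insert the maximum over $y$ at the appropriate step, which forces the use of a maximal form of the large sieve. I would split the range $q \leq Q$ at $q_0 = (\log x)^B$ for a constant $B = B(A)$ to be chosen. For $q \leq q_0$, the Siegel--Walfisz theorem gives $|\psi(y;q,a) - y/\phi(q)| \ll_B y\exp(-c\sqrt{\log y})$ uniformly in $(a,q)=1$ and $y \leq x$, so this range contributes $O(x^{2}q_0\exp(-2c\sqrt{\log x}))$, which is negligible compared to $xQ\log x$ once $B$ is large enough in terms of $A$.

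For $q_0 < q \leq Q$ and $(a,q)=1$, I would apply the standard character decomposition
\[
\psi(y;q,a) - \frac{y}{\phi(q)} \;=\; \frac{1}{\phi(q)}\sum_{\chi\neq\chi_0 \bmod q}\overline{\chi(a)}\,\psi(y,\chi) \;+\; \frac{\psi(y)-y}{\phi(q)} \;+\; O\!\left(\frac{\log q\,\log y}{\phi(q)}\right),
\]
where $\psi(y,\chi)=\sum_{n\leq y}\Lambda(n)\chi(n)$, the middle term comes from extracting the principal character (and is acceptable by the prime number theorem), and the error comes from prime powers dividing $q$. Orthogonality in $a$, followed by pulling $\max_y$ past the sum over $\chi$, reduces the task to estimating
\[
\sum_{q_0 < q \leq Q}\frac{1}{\phi(q)}\sum_{\chi\neq\chi_0 \bmod q}\max_{y\leq x}|\psi(y,\chi)|^{2}.
\]
Replacing each $\chi$ by the primitive character $\chi^{*}$ inducing it (at cost $O(\log q\,\log y)$) and reorganizing the sum around the conductor $d$ of $\chi^{*}$, using $1/\phi(dm)\leq 1/(\phi(d)\phi(m))$ to handle the resulting divisor sum, reduces matters to establishing a maximal large sieve
\[
\sum_{d\leq Q}\frac{d}{\phi(d)}\sum_{\chi\,\mathrm{prim}\bmod d}\max_{y\leq x}|\psi(y,\chi)|^{2} \;\ll\; (x + Q^{2})\,x\,(\log x)^{O(1)}.
\]

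The main obstacle is establishing this maximal large sieve estimate. I would follow Gallagher's strategy: for each primitive $\chi$, a Sobolev-type inequality dominates $\max_{y\leq x}|\psi(y,\chi)|^{2}$ by the $L^{2}$ mean of $|\psi(\cdot,\chi)|^{2}$ along a dyadic grid in $y$, together with the $L^{2}$ contribution of short-interval increments $|\psi(y',\chi)-\psi(y,\chi)|^{2}$ (equivalently, one can use Gallagher's Fourier-analytic kernel in this context). After summing over primitive $\chi$ of conductor $d\leq Q$, the first piece is controlled by the classical (non-maximal) large sieve applied pointwise in $y$ and then averaged, while the short-interval piece is controlled by an additional large sieve applied to the truncated von Mangoldt function, or by Brun--Titchmarsh. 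The scheme loses a bounded power of $\log x$ compared to the fixed-cutoff large sieve, but this is absorbed into the factor $(\log x)^{O(1)}$; combined with the dyadic weight $\log(Q/d)/d$ produced by the reduction to primitives and the cushion from Siegel--Walfisz on $q\leq q_{0}$, and using the hypothesis $Q\geq x(\log x)^{-A}$, one obtains the claimed bound $\ll_{A} xQ\log x$.
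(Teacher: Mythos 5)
The paper does not prove this theorem; it is quoted from Montgomery~\cite{M} (see also Hooley~\cite{H}) without an argument, so there is no internal proof to compare against. That said, your proposal has a genuine gap, and it lies exactly at the step you describe as ``pulling $\max_y$ past the sum over $\chi$.'' After orthogonality, Montgomery's left-hand side is $\sum_q \phi(q)^{-1}\max_y\sum_\chi|\psi'(y,\chi)|^2$, with one choice of $y$ per modulus $q$. Replacing this by $\sum_q \phi(q)^{-1}\sum_\chi\max_y|\psi(y,\chi)|^2$, one choice of $y$ per character, is a valid upper bound but is precisely the lossy move that separates Theorem~\ref{thm:MH} from the weaker Theorem~\ref{thm:U}. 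After that step, the result you need is a maximal large sieve of the form
\[
\sum_{d\leq Q}\frac{d}{\phi(d)}\ \sideset{}{^*}\sum_{\chi \bmod d}\ \max_{y\leq x}|\psi(y,\chi)|^2 \ \ll\ (x+Q^2)\sum_{n\leq x}\Lambda(n)^2 \ \asymp\ (x+Q^2)\,x\log x,
\]
that is, with \emph{no} additional logarithmic loss beyond the unavoidable $\sum_n\Lambda(n)^2\asymp x\log x$. But the best known maximal large sieve --- Uchiyama's, which this very paper records as Lemma~\ref{lem:LargeSieveU} --- carries a $\log^2 N$ penalty, and whether that $\log^2 N$ can be removed is exactly the open question of Montgomery that the paper quotes immediately after Lemma~\ref{lem:LargeSieveU}. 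Feeding Uchiyama's bound into your dyadic-in-$q$ decomposition yields $\ll Qx\log^3 x$, which recovers Theorem~\ref{thm:U}, not Theorem~\ref{thm:MH}; the Gallagher/Sobolev dyadic-grid scheme you sketch would give at best a single extra $\log$, hence $\ll Qx\log^2 x$, which is still short. Your closing remark that the losses are ``absorbed into the factor $(\log x)^{O(1)}$'' is where the argument breaks: the target bound $Qx\log x$ has no room for \emph{any} additional power of $\log$, so $(\log x)^{O(1)}$ is not an acceptable accounting. Montgomery's proof must, and does, retain the maximum outside the sum over $a$ (equivalently, over $\chi$), exploiting that $y$ is selected once per $q$ rather than once per character; any reduction that discards this structure at the outset cannot reach the single-$\log$ bound with current technology.
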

We note that the quantity on the left has potentially increased compared to the quantity in Theorem \ref{thm:BHD}, while the bound on the right is the same, up to the implicit constant.

Another variant of Theorem \ref{thm:BHD} is due to Uchiyama \cite{U}:
\begin{theorem}\label{thm:U}
Let $A > 0$. For all positive real numbers $x$ and $Q$ satisfying $x (\log(x))^{-A} \leq Q \leq x$,
\[\sum_{q \leq Q} \sum_{\substack{a\leq q \\ (a,q)=1}} \max_{y\leq x} \left(\psi(y;q,a)-\frac{y}{\phi(q)}\right)^2 \ll_A x Q \log^3(x).\]
\end{theorem}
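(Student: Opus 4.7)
The plan is to derive Theorem~\ref{thm:U} from Theorem~\ref{thm:BHD} by means of a Rademacher-Menshov-type dyadic decomposition, the standard device for converting an $L^2$ mean-value bound into a bound on the $L^2$ norm of the associated maximal function at the cost of logarithmic factors. The two extra powers of $\log x$ (relative to Theorem~\ref{thm:BHD}) will arise respectively from a Cauchy-Schwarz step in the decomposition and from summing a short-interval form of Theorem~\ref{thm:BHD} over dyadic scales.

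Write $E(y;q,a) := \psi(y;q,a) - y/\phi(q)$. Since $\psi(y;q,a)$ is constant on each $[n,n+1)$, one may replace $\max_{y \leq x}$ by $\max_{n \leq \lfloor x\rfloor}$ up to an error of $O(1/\phi(q))$ per pair $(q,a)$, which is absorbed in the claimed bound. For $n \leq x$, expand $n$ in binary as $n = 2^{e_1} + \cdots + 2^{e_r}$ with $e_1 > \cdots > e_r$ and $r \leq \log_2 x + 1$, and set $n_i = 2^{e_1} + \cdots + 2^{e_i}$; since each $n_{i-1}$ is automatically a multiple of $2^{e_i}$, telescoping yields
\[E(n;q,a) = \sum_{i=1}^{r}\bigl(E(n_i;q,a) - E(n_{i-1};q,a)\bigr),\]
a sum of at most $\log_2 x + 1$ differences of $E$ across dyadic blocks $(k 2^j, (k+1) 2^j]$, with at most one such block at each level $j$. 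Cauchy-Schwarz followed by $\max_n \leq \max_k \leq \sum_k$ gives
\[\max_{n \leq x} |E(n;q,a)|^2 \;\ll\; (\log x)\sum_{j=0}^{\log_2 x}\sum_k \bigl|E((k+1)2^j;q,a) - E(k 2^j;q,a)\bigr|^2.\]
Summing over $q \leq Q$ and $(a,q)=1$ then reduces the theorem to the short-interval Barban-Davenport-Halberstam estimate
\[\sum_{q \leq Q}\sum_{(a,q)=1} \bigl|\psi(x_1+h;q,a) - \psi(x_1;q,a) - h/\phi(q)\bigr|^2 \;\ll\; hQ\log x \qquad (*)\]
for every dyadic block of length $h$: at each scale $j$ the $\sim x/2^j$ blocks then contribute $\ll xQ\log x$ in total, summing over the $O(\log x)$ scales gives $\ll xQ\log^2 x$, and the outer Rademacher-Menshov factor of $\log x$ produces the desired $xQ\log^3 x$.

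The main obstacle is the short-interval bound $(*)$ with the correct linear-in-$h$ dependence. For $h$ comparable to $x$ this is immediate from Theorem~\ref{thm:BHD} applied at both endpoints together with the triangle inequality, but for generic $h$ the large sieve applied to $\Lambda \cdot \mathbf{1}_{(x_1, x_1+h]}$ only gives a bound of order $(h+Q^2)h\log x$, which is too weak. One must instead re-run the classical large-sieve / Siegel-Walfisz derivation of Theorem~\ref{thm:BHD} over the shifted short interval $(x_1, x_1+h]$ to obtain $(*)$ in the range of $h$ where Theorem~\ref{thm:BHD} applies. For the very smallest scales $h \lesssim Q$ one truncates the dyadic decomposition at scale $\sim Q$ and estimates the residual directly using crude bounds on $\psi$-differences over short intervals, checking that it is of lower order. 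This short-scale analysis, rather than the dyadic scheme itself, is the technical heart of the argument.
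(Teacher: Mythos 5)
You should first note that the paper does not prove Theorem~\ref{thm:U} on its own terms: it is cited as Uchiyama's result, and the paper instead proves the stronger variational Theorem~\ref{thm:VarBHD}, from which Theorem~\ref{thm:U} follows (taking, for each $(q,a)$, a two-interval partition $\{[1,\lfloor y^*\rfloor],(\lfloor y^*\rfloor,\lfloor x\rfloor]\}$ at the maximizing $y^*$, plus the standard $\psi\leftrightarrow\theta$ and rounding corrections). That said, your Rademacher--Menshov scheme is essentially the paper's: the paper decomposes each interval in the partition into $O(\log x)$ dyadic pieces via Lemma~\ref{lem:binarydecomp}, applies the $\ell^2$ triangle inequality over levels (your Cauchy--Schwarz step, giving one $\log x$), and then at each fixed level sums over all dyadic blocks at that scale, which is exactly your ``$\sim x/2^j$ blocks of length $2^j$'' count.

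The genuine issue in your write-up is the status of the short-interval estimate $(*)$. As stated, $(*)$ is false for small $h$: the Siegel--Walfisz contribution from moduli $q\le Q_1=\log^{A+1}(x)$ and non-principal $\chi$ yields $|\psi(I,\chi)|\ll_A xe^{-c_A\sqrt{\log x}}$ \emph{with an $x$, not an $h$}, because SW is a long-range cancellation phenomenon that does not localize to short blocks. Thus the per-block SW piece is $\ll Q_1\log Q\cdot x^2e^{-c\sqrt{\log x}}$, independent of $h$, and summing over the $\sim x/2^j$ blocks at a small scale $j$ blows up the budget. You sense this (``the technical heart''), and your proposed remedy—truncate the dyadic telescoping at scale $\sim Q$ and handle the residual crudely—can in fact be made rigorous: for $h\gtrsim Q\ge x\log^{-A}(x)$ the SW term is dominated by $hQ\log x$ (take the SW exponent large enough), and the residual interval has length $O(Q)$, so Brun--Titchmarsh gives $|\psi((m,n];q,a)-(n-m)/\phi(q)|\ll Q\log x/\phi(q)$, whose square summed over $(q,a)$ is $\ll Q^2\log^3 x\le xQ\log^3 x$. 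This is a valid alternative route for the maximal statement. The paper sidesteps the truncation entirely by proving a \emph{variational} Siegel--Walfisz bound (Lemma~\ref{lem:VarSW}): $\max_{\pi}\sum_{I\in\pi}|\theta(I,\chi)|^2\ll_A x^2e^{-c'_A\sqrt{\log x}}$, which controls the sum over \emph{all} blocks at a given scale at once rather than block by block, and is also what is needed to prove the full variational Theorem~\ref{thm:VarBHD} (where a per-$(q,a)$ truncation does not make sense). So your approach is correct in spirit and can be completed for Theorem~\ref{thm:U}, but to match the paper's Theorem~\ref{thm:VarBHD} you would need the variational SW lemma rather than $(*)$ plus truncation.
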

This is incomparable to Theorems \ref{thm:BHD} and \ref{thm:MH}, since the quantity being bounded is larger and the bound obtained is worse. Hooley, in \cite{H2}, has announced a refinement to the $\log^3(N)$ for certain values of $Q$. This seems to have not yet appeared, however.

We work with the function $\theta$ instead of $\psi$ because it is more convenient for our purposes, though this is a minor difference.
To further refine our understanding of the deviation of $\theta(x;q,a)$ from its average value of $\frac{x}{\phi(q)}$, we introduce a variational operator in place of the maximal one in Theorem \ref{thm:U}. Letting $\{c_n\}_{n=1}^N$ be a finite sequence of complex numbers and letting $\mathcal{P}_N$ denote the set of partitions of $[N]:= \{1,2, \ldots, N\}$ into disjoint intervals, we define the $r$-variation of the sequence to be:
\[\left|\left|\{c_n\}_{n=1}^N\right|\right|_{V^r} := \max_{\pi \in \mathcal{P}_N} \left( \sum_{I \in \pi} \left|\sum_{n \in I} c_n\right|^r \right)^{\frac{1}{r}}.\]

We can think of $\theta(x;q,a)$ as a sum over a sequence $\{b_n\}_{n=1}^N$, where $N = \lfloor x \rfloor$ and
\[b_n := \left\{
    \begin{array}{ll}
      \log(n), & \hbox{$n$ prime;} \\
      0, & \hbox{otherwise.}
    \end{array}
  \right.\]
For an interval $I$, we define
\[\theta(I;q,a):= \sum_{n \in I} b_n.\]
Letting $|I|$ denote the number of integers contained in $I$, we then have that
\[ \max_{\pi \in \mathcal{P}_N} \sum_{I \in \pi} \left( \theta(I; q, a) - \frac{|I|}{\phi(q)}\right)^2\]
is the square of the 2-variation of the sequence $\{b_n-1/\phi(q)\}_{n=1}^N$.

Our main result is an upper bound on this quantity, summing over $q\leq Q$ and $a$ coprime to $q$ as in the above theorems.
This is a strengthening of Theorem \ref{thm:U}, since we obtain the same bound (up to the implied constant) on a larger quantity. To simplify our notation, we let $\mathcal{P}_x$ denote the set of partitions of $\{1,\ldots, \lfloor x\rfloor \}$ into disjoint intervals. We prove:
\begin{theorem}\label{thm:VarBHD} Let $A > 0$. For all positive real numbers $x$ and $Q$ satisfying $x (\log(x))^{-A} \leq Q \leq x$,
\[\sum_{q \leq Q} \sum_{\substack{ a\leq q \\ (a,q) = 1}} \max_{\pi \in \mathcal{P}_x} \sum_{I \in \pi} \left( \theta(I; q, a) - \frac{|I|}{\phi(q)}\right)^2 \ll_A xQ \log^3(x).\]
\end{theorem}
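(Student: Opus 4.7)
The plan is to mimic the classical derivation of Barban--Davenport--Halberstam from the large sieve inequality, with the maximal operator systematically replaced by the $2$-variation. The enabling input is a variational form of the Rademacher--Menshov inequality: for any finite sequence $\{c_n\}_{n=1}^N$ with partial sums $S_n$,
\[
\|\{c_n\}\|_{V^2}^2 \;\le\; C (\log N) \sum_{D\subseteq[1,N]\ \text{dyadic}} \bigl|\textstyle\sum_{n\in D} c_n\bigr|^2.
\]
I would prove this by writing each interval $I$ of an arbitrary partition $\pi$ as a disjoint union of at most $2\log_2 N$ maximal dyadic subintervals $D_j(I)$, applying Cauchy--Schwarz to bound $|S_I|^2 \le 2\log_2 N \sum_j |S_{D_j(I)}|^2$, and summing over $I \in \pi$; because the dyadic pieces of disjoint intervals form a disjoint subcollection of the dyadic intervals in $[1,N]$, the right-hand side is independent of $\pi$.

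Plugging this lemma into the sequences $\{c_n e(n\alpha_i)\}$ and interchanging sums, a fixed-$D$ application of the classical large sieve yields a variational form of the large sieve. To deduce Theorem \ref{thm:VarBHD}, I would then follow the standard route from large sieve to BDH: expand $\mathbf{1}_{n\equiv a\bmod q}$ via Dirichlet characters, invoke character orthogonality for each fixed dyadic $D$ (which eliminates the $a$-dependence and yields $\phi(q)\sum_\chi|\theta(D,\chi)|^2$), reduce induced characters to primitive $\chi^*$ modulo $q^*$, dyadically decompose $q^* \sim Q_0$, and apply the classical large sieve $\sum_{q^* \le Q_0, \chi^*} \tfrac{q^*}{\phi(q^*)} |\theta(D,\chi^*)|^2 \ll (|D| + Q_0^2) \|b\mathbf{1}_D\|_2^2$ at each dyadic band. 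The principal-character contribution produces a tractable error from the prime number theorem, and the small-moduli range $q^* \le (\log x)^B$ is handled by Siegel--Walfisz, whose contribution is then smaller than any fixed power of $\log x$.

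The main obstacle is careful bookkeeping of logarithmic factors to meet the target $xQ\log^3 x$, which coincides with Uchiyama's maximal bound in Theorem \ref{thm:U} and therefore leaves essentially no slack. The variational Rademacher--Menshov step contributes $\log N$, the primitive-character sum contributes $\log Q$, and the factor $\|b\|_2^2 \ll x\log x$ contributes a further $\log x$. The sum $\sum_D (|D|+Q_0^2)\|b\mathbf{1}_D\|_2^2$ over all dyadic $D$ naively carries an additional $\log N$, since each integer lies in $\log N$ dyadic intervals; the crucial balancing step is to verify that Siegel--Walfisz on the small-moduli side and the geometric dyadic sum $\sum_{Q_0} 1/Q_0 = O(1)$ on the large-moduli side together absorb this loss, leaving exactly the three logarithms required.
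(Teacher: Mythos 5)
Your plan reproduces the paper's proof in all essential respects: the same dyadic-interval decomposition, the reduction of the variational BDH quantity to a partition-independent sum over dyadic intervals, character orthogonality, reduction to primitive characters, dyadic splitting in the modulus, and the classical large sieve for large moduli plus Siegel--Walfisz for small moduli. The only real difference is combinatorial packaging: you apply Cauchy--Schwarz to $S_I=\sum_j S_{D_j(I)}$ at the outset to pull out a global $\log N$ loss (your Rademacher--Menshov step, after which the partition maximum is trivial), while the paper applies the $\ell^2$-triangle inequality over dyadic levels, writing the square root of the full quantity as $\sum_{\ell=0}^k(\cdot)^{1/2}$; after squaring, the dominant level-independent term picks up $\log^2 x$ in both bookkeepings and the two routes match term-for-term, including the absorption of the $\log(2Q/U)$ from primitivity.

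One step that you elide is a genuine gap. For moduli $q\le Q_1=(\log x)^{A+1}$ you must bound $\sum_D|\theta(D,\chi)|^2$ with $D$ ranging over all dyadic subintervals of $[1,x]$, and the standard Siegel--Walfisz statement does not give this directly: writing $\theta(D,\chi)=\theta(y,\chi)-\theta(y',\chi)$ for $D=(y',y]$ and applying Siegel--Walfisz to each endpoint fails whenever the left endpoint $y'$ is too small for $q$ to satisfy $q\le(\log y')^{B}$. The paper isolates exactly this as a standalone ``variational Siegel--Walfisz'' (Lemma~\ref{lem:VarSW}), proved by factoring $\sum_{I\in\pi}|\theta(I,\chi)|^2\le\bigl(\sum_{I\in\pi}|\theta(I,\chi)|\bigr)\cdot\max_{J\subseteq[1,x]}|\theta(J,\chi)|\le\theta(x)\cdot\max_{y\le x}|\theta(y,\chi)|$, then bounding $\max_{y\le x}|\theta(y,\chi)|$ by a case split between large $y$ (Siegel--Walfisz with the doubled exponent $2A$) and small $y$ (the trivial bound $\theta(y)\ll y\ll e^{(\log x)^{1/2}}$, which already beats the target). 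You would need to supply an equivalent lemma; quoting standard Siegel--Walfisz alone does not close the small-moduli contribution in your sketch.
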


We also establish a variant of this, obtaining a better bound by allowing the partition to depend only on $q$ and not on $a$:
\begin{theorem}\label{thm:VarBHD2}
Let $A > 0$. For all positive real numbers $x$ and $Q$ satisfying $x (\log(x))^{-A} \leq Q \leq x$,
\[\sum_{q \leq Q} \max_{ \pi \in \mathcal{P}_x} \sum_{\substack{a\leq q \\ (a,q)=1}} \sum_{I \in \pi} \left(\theta(I;a,q)-\frac{|I|}{\phi(q)}\right)^2 \ll_A xQ \log^2(x).\]
\end{theorem}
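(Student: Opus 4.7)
The plan is to reduce the claim to a variational form of the large sieve inequality via character orthogonality, paralleling the classical proof of the Barban--Davenport--Halberstam theorem. The key structural advantage of Theorem~\ref{thm:VarBHD2} over Theorem~\ref{thm:VarBHD} is that the partition $\pi$ depends only on $q$ and not on $a$: this permits applying Parseval across the residues $(a,q)=1$ \emph{inside} the outer maximum over partitions, rather than requiring a preliminary Rademacher--Menshov dyadic decomposition. It is this savings that replaces the $\log^3 x$ factor of Theorem~\ref{thm:VarBHD} by $\log^2 x$ here.

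Setting $\theta(I,\chi) := \sum_{n \in I} b_n \chi(n)$, Dirichlet character orthogonality gives, for any fixed partition $\pi$ and any interval $I \in \pi$,
\[
\sum_{(a,q)=1}\left|\theta(I;q,a) - \frac{|I|}{\phi(q)}\right|^2 = \frac{1}{\phi(q)} \sum_{\chi \ne \chi_0 \bmod q} |\theta(I,\chi)|^2 + E_q(I),
\]
where $E_q(I)$ is a lower-order error coming from the principal character and primes dividing $q$. Summing over $I \in \pi$, taking the maximum over $\pi$ \emph{outside} the $a$-sum, and using the subadditivity inequality $\max_\pi \sum_\chi G_\chi(\pi) \le \sum_\chi \max_\pi G_\chi(\pi)$ with $G_\chi(\pi) := \sum_{I \in \pi} |\theta(I,\chi)|^2$, the left side of the theorem is bounded (modulo admissible errors) by
\[
\sum_{q \le Q} \frac{1}{\phi(q)} \sum_{\chi \ne \chi_0 \bmod q} \left\|\{b_n\chi(n)\}_{n=1}^{\lfloor x \rfloor}\right\|_{V^2}^2.
\]

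It therefore suffices to bound this character sum by $O_A(xQ \log^2 x)$. This will follow from a variational form of the multiplicative large sieve---the main analytic ingredient of the paper---combined with the standard Davenport inducing procedure: express each nonprincipal character mod $q$ in terms of a primitive character mod $d \mid q$, $d > 1$; estimate $\sum_{q:\,d\mid q,\,q\le Q} 1/\phi(q) \ll \log(Q/d)/\phi(d)$ and decompose dyadically in $d$; apply the variational large sieve at each dyadic range of moduli; and use the elementary bound $\sum_n b_n^2 \ll x \log x$. The main obstacle is proving the variational large sieve with exactly the correct logarithmic loss: a naive application of the Rademacher--Menshov-type inequality $\|\{c_n\}\|_{V^2}^2 \ll (\log N) \sum_{I \text{ dyadic}} |\sum_{n\in I} c_n|^2$ followed by the classical large sieve applied separately at each of the $\log N$ dyadic scales produces an extra logarithmic factor that would degrade the final bound to $xQ \log^3 x$. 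Avoiding this requires a more refined argument---for instance, exploiting orthogonality across dyadic scales or interpolating with a log-loss-free $V^r$ estimate for some $r > 2$ available via L\'epingle-type martingale arguments.
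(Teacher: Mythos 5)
Your structural reduction is exactly the one the paper uses, and you have correctly identified the key leverage point: because the partition $\pi$ in Theorem~\ref{thm:VarBHD2} is chosen uniformly in $a$, orthogonality of Dirichlet characters can be applied \emph{inside} the max over $\pi$, landing you directly on a sum of $V^2$ norms of twisted exponential sums, with no preliminary dyadic (Rademacher--Menshov) splitting. After the Davenport inducing step and the divisor estimate $\sum_{j\le Q/q}1/\phi(jq)\ll \phi(q)^{-1}\log(2Q/q)$, the paper proceeds exactly as you outline. That much is right.

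However, two of the ingredients you invoke are left as gaps, and one of them is the real content of the theorem. First, the central analytic input is Lemma~\ref{lem:mid}, the variational large sieve with a \emph{single} $\log N$ loss:
\[
\sum_{q \leq Q} \frac{q}{\phi(q)}\, \max_{\pi \in \mathcal{P}_N}\, \sideset{}{^*}\sum_{\chi \bmod q}\, \sum_{I \in \pi} |T(I,\chi)|^2 \ll (N+Q^2)\log N \sum_{n=1}^N |a_n|^2.
\]
You correctly observe that the naive Rademacher--Menshov route loses $\log^2 N$ and would only reproduce the $\log^3 x$ of Theorem~\ref{thm:VarBHD}. But your two suggested remedies do not lead anywhere. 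Interpolation between a $\log$-free $V^r$ estimate for $r>2$ (Theorem~\ref{thm:VarCar}, or a L\'epingle-type bound) and anything at $r=2$ does not recover a clean $\log N$ at the endpoint: $V^2$ is the endpoint and is genuinely harder. The paper instead uses Gauss sums to convert $T(I,\chi)$ into exponential sums $S(a/q)$, then applies a Selberg--Beurling extremal-majorant argument (Lemma~\ref{lem:analyticLS}) to pass from a well-spaced discrete sum to an integral over $\mathbb{T}$, and finally invokes the sharp $V^2$ variational Plancherel inequality $\int_{\mathbb{T}}\|S(\alpha)\|_{V^2}^2\,d\alpha\ll \log N\sum|a_n|^2$ (Theorem~\ref{thm:r2}, imported from \cite{LL}). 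That Plancherel-type inequality is the engine; it is neither a martingale nor an interpolation consequence, and your sketch does not supply a substitute for it.

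Second, the small moduli are not addressed. The large-sieve bound $U^{-1}(x+U^2)$ only dominates $xU$ when $U$ is not too small; for $q \le Q_1 = \log^{A+1}(x)$ you must instead use the variational Siegel--Walfisz estimate (Lemma~\ref{lem:VarSW}) to get $\sum_{I\in\pi}|\theta(I,\chi)|^2 \ll_A x^2 e^{-\tilde c_A\sqrt{\log x}}$ uniformly over partitions and nonprincipal $\chi$, and it is this that absorbs the contribution of the small $q$ into the target bound. Omitting this leaves the argument incomplete even once the variational large sieve is in hand.
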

In comparison to Theorem \ref{thm:MH}, this maximizes over partitions instead of restricting to partial sums, but the bound obtained is worse by a multiplicative $\log(x)$ factor.

The introduction of this maximum over partitions allows us to apply our theorem to prove a weakened, averaged version of a conjecture made by Erd\H{o}s. We let $p_i$ denote the $i^{th}$ prime. Erd\H{o}s made the following conjecture:
\begin{conjecture}(Erd\H{o}s, \cite{E})
\[
\sum_{p_{i+1} \leq x} (p_{i+1}-p_i)^2 \ll x \log(x).
\]
\end{conjecture}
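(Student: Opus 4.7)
The plan is to attempt Erdős's conjecture by specializing Theorem \ref{thm:VarBHD} to the trivial modulus. Taking $q=1$, $a=1$, so that $\phi(q)=1$ and $\theta(I;1,1)=\sum_{p\in I}\log p$, I would choose the partition $\pi^\star \in \mathcal{P}_x$ whose intervals are $I_i = (p_i,p_{i+1}]$ for $p_{i+1}\leq x$. Each $I_i$ contains exactly one prime, namely $p_{i+1}$, so $\theta(I_i;1,1)=\log p_{i+1}$ and $|I_i|/\phi(1)=g_i:=p_{i+1}-p_i$. The inner sum becomes $\sum_i(\log p_{i+1}-g_i)^2$. Expanding gives $\sum_i g_i^2 = \sum_i(g_i-\log p_{i+1})^2 + 2\sum_i g_i\log p_{i+1} - \sum_i\log^2 p_{i+1}$. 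Since $\sum_i g_i \leq x$ by telescoping and $\sum_i \log^2 p_{i+1} \leq (\log x)\,\theta(x) \ll x\log x$ by Chebyshev, the final two sums are each $O(x\log x)$, so a bound of $O(x\log x)$ on the leading sum would imply the conjecture.

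The main obstacle is that Theorem \ref{thm:VarBHD} requires $Q$ in the range $x(\log x)^{-A}\leq Q\leq x$ and therefore cannot be applied with $Q=1$ to isolate the trivial modulus. Each summand in the double sum over $q$ and $a$ is non-negative, so the $q=1$, $a=1$ contribution is majorized by the full left-hand side, which is bounded by $\ll xQ\log^3 x$; with $Q$ of order $x$ this yields only $\ll x^2\log^3 x$ for the single term, which is worthless. What Theorem \ref{thm:VarBHD} does deliver cleanly is a \emph{weakened, averaged} analogue: summed also over $q\leq Q$ and reduced residues $a$, the corresponding squared prime-gap sum along each arithmetic progression is bounded by $\ll xQ\log^3 x$, and this — rather than the unaveraged conjecture itself — is what the authors allude to in the introduction.

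To recover the $q=1$ statement, one would instead have to supply a single-modulus variational input of the form $\max_{\pi\in\mathcal{P}_x}\sum_{I\in\pi}(\theta(I)-|I|)^2 \ll x\log^C x$. Under the Riemann hypothesis, Selberg's classical argument furnishes such a bound with $C=3$, recovering $\sum_{p_{i+1}\leq x}g_i^2 \ll x\log^3 x$. The hard part will be shaving the two extra powers of $\log x$ down to the conjectured $C=1$: this appears to require pointwise control of $\theta(y)-y$ across every short interval simultaneously, substantially stronger than RH-type information and well beyond the $L^2$-averaged machinery developed in this paper.
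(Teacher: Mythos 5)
You have correctly recognized that this statement is a genuine open conjecture which the paper records only as motivation; it is not proved there and cannot be deduced from the paper's results. Your diagnosis of the obstruction is accurate: Theorem \ref{thm:VarBHD} bounds a non-negative double sum over $q \leq Q$ and residues $a$ coprime to $q$, but the hypothesis $x(\log x)^{-A} \leq Q \leq x$ prevents taking $Q$ small, and discarding all but the $q=1$, $a=1$ term from the majorizing sum yields at best $\ll x^{2}\log^{3-A}(x)$, which is vacuous. What Theorem \ref{thm:VarBHD} does yield is precisely Corollary \ref{cor:erdos}, the averaged statement. Your variational reduction is essentially the one the paper uses for that corollary, differing only in the choice of test partition: you take the half-open gaps $(p_i, p_{i+1}]$, on which $\theta(I_i)=\log p_{i+1}$ and $|I_i|=g_i$, and then expand $g_i^{2} = (g_i - \log p_{i+1})^{2} + 2g_i\log p_{i+1} - \log^{2}p_{i+1}$, bounding the two extra sums by $O(x\log x)$ via $\sum_i g_i \leq x$ and $\sum_{p\leq x}\log^{2}p \ll x\log x$; the paper instead uses the open intervals $(p_i^{a,q}, p_{i+1}^{a,q})$, on which $\theta$ vanishes, so only the elementary inequality $(g-1)^{2}\gg g^{2}$ is needed. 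Both reductions are sound. Your closing assessment is also fair: a single-modulus variational bound $\max_{\pi\in\mathcal{P}_x}\sum_{I\in\pi}(\theta(I)-|I|)^{2}\ll x\log^{C}x$ with $C=1$ would settle the conjecture, Selberg's RH-conditional argument gives $C=3$ (as the paper cites), and closing the gap appears to require uniform short-interval control of $\theta(y)-y$ that lies well beyond both the $L^{2}$-averaged machinery of this paper and what GRH alone provides.
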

This asymptotic is heuristically suggested by the prime number theorem, which implies the reverse inequality. Assuming the Riemann Hypothesis, Selberg obtained the bound
\[ \sum_{p_{i+1} \leq x} (p_{i+1}-p_i)^2 \ll x \log^3(x).\]

It is natural to extend the conjecture to arithmetic progressions. Fixing $a,q$ such that $(a,q) =1$, we let $p_i^{a,q}$ denote the $i^{th}$ prime congruent to $a$ modulo $q$. One then formulates the conjecture as:
\begin{conjecture} For any $a,q$ such that $(a,q) = 1$,
\[ \sum_{p_i^{a,q} \leq x} \left(\frac{p_{i+1}^{a,q} - p_i^{a,q}}{\phi(q)}\right)^2 \ll \frac{x \log(x)}{\phi(q)}.\]
\end{conjecture}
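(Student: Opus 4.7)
The plan is to recast the conjectured gap sum as a specific instance of the variational quantity bounded in Theorem~\ref{thm:VarBHD}, and then to attempt to establish the required pointwise variational estimate for a fixed $(a,q)$ by adapting Selberg's zero-sum argument for the classical ($q=1$) case.

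First, I would choose the partition $\pi^{\star}\in\mathcal{P}_x$ whose blocks are the intervals $I_i=(p_i^{a,q},\,p_{i+1}^{a,q}]$ between consecutive primes in the progression (with negligible end-corrections at $1$ and $\lfloor x\rfloor$). On each $I_i$ one has $\theta(I_i;q,a)=\log p_{i+1}^{a,q}$ and $|I_i|=p_{i+1}^{a,q}-p_i^{a,q}$, so expanding the square and summing gives
\[
\sum_{i}\Bigl(\theta(I_i;q,a)-\tfrac{|I_i|}{\phi(q)}\Bigr)^{2}
=\sum_{i}\Bigl(\tfrac{p_{i+1}^{a,q}-p_i^{a,q}}{\phi(q)}\Bigr)^{2}
-2\sum_{i}\tfrac{(p_{i+1}^{a,q}-p_i^{a,q})\log p_{i+1}^{a,q}}{\phi(q)}
+\sum_{i}\log^{2}p_{i+1}^{a,q}.
\]
The last two terms are each $O(x\log(x)/\phi(q))$, using $\sum_i (p_{i+1}^{a,q}-p_i^{a,q})\le x$, $\log p_{i+1}^{a,q}\le \log x$, and the prime number theorem for arithmetic progressions to count the primes up to $x$ in the class. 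Hence the conjecture is \emph{equivalent} to the pointwise variational estimate
\[
\max_{\pi\in\mathcal{P}_x}\sum_{I\in\pi}\Bigl(\theta(I;q,a)-\tfrac{|I|}{\phi(q)}\Bigr)^{2}\ \ll\ \frac{x\log x}{\phi(q)}.
\]

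Second, to attack this pointwise bound I would invoke the explicit formula, writing $\theta(I;q,a)-|I|/\phi(q)$ via orthogonality of Dirichlet characters mod $q$ as a sum over non-principal $\chi$ with coefficients $\bar\chi(a)$ and inner sums over zeros $\rho_\chi$ of $L(s,\chi)$. Under GRH, Cauchy--Schwarz in $\rho_\chi$ together with classical zero-density information reduces the variational maximum to an integrated $L^2$ bound of the form $\int_0^x(\theta(y+h;q,a)-\theta(y;q,a)-h/\phi(q))^{2}\,dy$, which is then summed dyadically in $h$ as in Selberg's original treatment of the $q=1$ case. Executing this argument for a single character twist produces, at best, the conditional bound $O_A(x\log^{3}(x)/\phi(q))$ for the variational maximum.

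The main obstacle --- and the reason the conjecture is unresolved even for $q=1$ --- is the gap between this $\log^{3}(x)$ factor and the conjectured $\log(x)$. Theorem~\ref{thm:VarBHD2} does save one logarithm, but only after averaging over the residues $a$, and there appears to be no mechanism to localize that saving to a single progression; similarly, any attempt to use Theorem~\ref{thm:VarBHD} for a single $(a,q)$ only yields the vacuous global estimate $xQ\log^{3}(x)$. Closing the gap would require a genuinely new ingredient (for instance, cancellation among zeros of $L$-functions beyond what GRH supplies, or an unaveraged variational large-sieve estimate saving two logarithms). Absent such an input, the honest output of the plan is the conditional bound $\sum_{p_i^{a,q}\le x}(p_{i+1}^{a,q}-p_i^{a,q})^2\ll_A\phi(q)\,x\log^{3}(x)$, which falls $\log^{2}(x)$ short of the stated conjecture.
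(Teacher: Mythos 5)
The statement you were asked about is labeled a \emph{conjecture} in the paper, and the paper neither proves it nor claims to --- indeed it is open even for $q=1$ (that is Erd\H{o}s' original conjecture, and the paper cites Selberg's conditional $x\log^3(x)$ bound and the Cheer--Goldston lower bound showing the constant cannot be $1$). So there is no paper proof to compare against, and your recognition of this fact is exactly the right conclusion.

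Your reduction is sound and mirrors the device the paper itself uses: partitioning $[1,\lfloor x\rfloor]$ at consecutive primes in the progression so that the gap sum is controlled by the variational maximum $\max_{\pi\in\mathcal{P}_x}\sum_{I\in\pi}\left(\theta(I;q,a)-|I|/\phi(q)\right)^2$ via the specific partition $\pi^{\star}$. This is precisely how the paper deduces Corollary~\ref{cor:erdos} from Theorem~\ref{thm:VarBHD}, except the paper applies it only in the averaged setting (summing over $q\le Q$ and $a$ coprime to $q$), where Theorem~\ref{thm:VarBHD} supplies the needed $xQ\log^3(x)$. One small overstatement in your write-up: you assert that the conjecture is \emph{equivalent} to the pointwise variational estimate, but the implication you establish runs only one way --- the variational maximum dominates the sum over the particular partition $\pi^{\star}$ (up to the two acceptable error terms you compute), whereas the conjectured bound on the gap sum does not obviously control arbitrary partitions. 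Since you use the implication in the correct direction, this does not affect your conclusion. Your final assessment --- that the best available pointwise input (GRH via the Selberg-type argument, or equivalently the paper's GRH analogue of Lemma~\ref{lem:VarSW}) yields only $\phi(q)\,x\log^3(x)$ for the gap sum, two logarithms short of the conjectured $\phi(q)\,x\log(x)$, and that the extra logarithm saved in Theorem~\ref{thm:VarBHD2} is tied to averaging over $a$ and does not localize to a single progression --- is accurate, and is implicitly the paper's reason for stating this as a conjecture rather than a theorem.
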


If we then sum over all $q \leq Q$ and all $a$ coprime to $q$, we would expect to get $\ll Qx \log(x)$. We derive the following weaker bound:

\begin{corollary}\label{cor:erdos}Let $A > 0$. For all positive real numbers $x$ and $Q$ satisfying $x (\log(x))^{-A} \leq Q \leq x$,
\[\sum_{q \leq Q} \sum_{\substack{a \leq q \\ (a,q)=1}} \sum_{p_{i+1}^{a,q} \leq x} \left( \frac{p_{i+1}^{a,q}-p_i^{a,q}}{\phi(q)}\right)^2 \ll Qx \log^3(x).\]
\end{corollary}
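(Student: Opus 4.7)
The plan is to derive the corollary from Theorem \ref{thm:VarBHD} by choosing, for each pair $(q,a)$ with $(a,q)=1$, a bespoke partition whose intervals are determined by the consecutive primes in the progression $a \pmod q$. Concretely, let $p_1^{a,q} < p_2^{a,q} < \cdots < p_k^{a,q}$ be the primes up to $x$ in that progression, and let $\pi_{q,a} \in \mathcal{P}_x$ consist of an initial interval $[1,p_1^{a,q}]$, the \emph{middle} intervals $I_i = (p_i^{a,q}, p_{i+1}^{a,q}]$ for $1 \le i \le k-1$, and a possibly empty tail. On each middle $I_i$, exactly one prime (namely $p_{i+1}^{a,q}$) lies in the progression, so $\theta(I_i;q,a) = \log(p_{i+1}^{a,q})$ and $|I_i| = p_{i+1}^{a,q} - p_i^{a,q}$; this is the bridge linking the variational quantity of Theorem \ref{thm:VarBHD} to the prime gaps appearing in the corollary.

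Next I would apply the elementary inequality $y^2 \leq 2(y-z)^2 + 2z^2$ with $y = |I_i|/\phi(q)$ and $z = \theta(I_i;q,a)$ to get
\[
\left(\frac{p_{i+1}^{a,q}-p_i^{a,q}}{\phi(q)}\right)^{2} \le 2\left(\theta(I_i;q,a) - \frac{|I_i|}{\phi(q)}\right)^{2} + 2\log^2(p_{i+1}^{a,q}),
\]
sum over $i$ with $p_{i+1}^{a,q} \leq x$, and then over $q \leq Q$ and $a$ coprime to $q$. Since the squared terms are nonnegative, restoring the edge intervals of $\pi_{q,a}$ only enlarges the sum, so the first right-hand quantity is dominated by the variational sum on the left-hand side of Theorem \ref{thm:VarBHD}, contributing $\ll_A Qx\log^3(x)$.

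For the remaining sum $2\sum_{q,a}\sum_i \log^2(p_{i+1}^{a,q})$, the inner double sum over $(a,p)$ with $(a,q)=1$ and $p \equiv a \pmod q$ counts each prime $p\le x$ with $(p,q)=1$ at most once, so it is bounded by $\sum_{p \le x} \log^2(p) \ll x\log(x)$ via a Chebyshev-type bound. Summing over $q\le Q$ yields $\ll Qx\log(x)$, which is absorbed into $Qx\log^3(x)$.

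The conceptual step is the choice of partition; once that is in place, the rest reduces to the elementary $(u+v)^2 \le 2u^2+2v^2$ inequality and a standard bound on $\sum_{p\le x}\log^2 p$. The only bookkeeping point requiring care is to confirm that $\pi_{q,a}$ is a legitimate element of $\mathcal{P}_x$ and that the restricted middle-interval sum is therefore majorized by the maximum over partitions; both follow immediately from nonnegativity and the definitions.
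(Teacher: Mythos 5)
Your proof is correct and follows essentially the same strategy as the paper: choose, for each pair $(q,a)$, a partition of $[1,\lfloor x\rfloor]$ whose intervals track the gaps between consecutive primes in the progression, and then invoke Theorem \ref{thm:VarBHD} by nonnegativity. The only difference is a matter of bookkeeping at the right endpoint: the paper takes the gap intervals to be \emph{open}, $I_i = (p_i^{a,q},p_{i+1}^{a,q})$, so that they contain no primes of the progression, giving $\theta(I_i;q,a)=0$ and $|I_i|=p_{i+1}^{a,q}-p_i^{a,q}-1$, whence the squared deviation is $\bigl(p_{i+1}^{a,q}-p_i^{a,q}-1\bigr)^2/\phi(q)^2 \gg \bigl(p_{i+1}^{a,q}-p_i^{a,q}\bigr)^2/\phi(q)^2$ once the trivial $q=1,\ p_i=2,\ p_{i+1}=3$ case is excluded; you instead take half-open intervals $(p_i^{a,q},p_{i+1}^{a,q}]$ containing the single prime $p_{i+1}^{a,q}$, which obliges you to peel off the $\log^2(p_{i+1}^{a,q})$ terms via $(u+v)^2\le 2u^2+2v^2$ and absorb them with the Chebyshev-type bound $\sum_{p\le x}\log^2 p \ll x\log x$. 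Both variants are sound; yours trades the small edge-case check for one extra elementary estimate, and arrives at the same $\ll Qx\log^3(x)$.
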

This can be viewed as an averaged, unconditional version of Selberg's bound, and is easily obtained from Theorem \ref{thm:VarBHD}.

More generally, the study of variational quantities introduces new and interesting questions. For example, is there an elementary function $f$ such that
\begin{equation}\label{eq:asym}
\max_{\pi \in \mathcal{P}_x} \sum_{I \in \pi} \left(\sum_{n \in I} (\Lambda(n)-1)\right)^2 \sim f(x)?
\end{equation}

The prime number theorem gives an (asymptotic) lower bound of $x\log(x)$ on this quantity. We note, however, that one cannot hope to have $f(x) = x\log(x)$. This follows from the work of Cheer and Goldston \cite{CG}, who proved
\begin{theorem}\label{thm:cg}
For any $\epsilon > 0$, there exists an $X_0$ such that for all $x > X_0$,
\[\sum_{p_{i+1} \leq x} |p_{i+1}-p_{i}|^{2} \geq (193/192 - \epsilon) x \log x.\]
\end{theorem}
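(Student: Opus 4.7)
The strategy is to exploit the existence of a positive proportion of prime gaps noticeably smaller than the prime-number-theorem average $\log x$, which forces a genuine improvement over the trivial bound. Write $N = \pi(x) = (1 + o(1))\, x/\log x$ and $d_i = p_{i+1} - p_i$. Since $\sum_{p_{i+1} \leq x} d_i = x + o(x)$ by the prime number theorem, Cauchy--Schwarz immediately gives $\sum d_i^2 \geq (\sum d_i)^2 / N = (1 + o(1))\, x \log x$, with equality morally corresponding to all gaps being equal to $\log x$. To beat this, one must quantitatively certify that the gaps are spread out.

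The key input I would invoke is a \emph{positive-proportion} small-gaps statement: there exist fixed $\lambda < 1$ and $\delta > 0$ so that, for $x$ sufficiently large, at least $\delta N$ of the primes $p_i \leq x$ satisfy $d_i \leq \lambda \log x$. Such assertions originate with Erd\H{o}s and are refined quantitatively by Bombieri--Davenport, Huxley, and later authors; the specific fraction $193/192$ reflects the best admissible $(\lambda, \delta)$ available to Cheer and Goldston. Granting this, split the indices into $S = \{i : d_i \leq \lambda \log x\}$ and $L = \{i : d_i > \lambda \log x\}$, and set $\sigma = \sum_{i \in S} d_i$. Applying Cauchy--Schwarz separately on each class gives
\[
\sum_{i \in S} d_i^2 \;\geq\; \frac{\sigma^2}{|S|}, \qquad \sum_{i \in L} d_i^2 \;\geq\; \frac{(x - \sigma + o(x))^2}{N - |S|}.
\]

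Writing $|S| = \alpha N$ and $\sigma = s\, x$, the constraints $\alpha \geq \delta$ and $s \leq \lambda \alpha$ carve out an admissible region, and summing the two Cauchy--Schwarz estimates produces a lower bound of the form $F(\alpha, s)\, x \log x$. A straightforward convexity calculation shows that $F$ strictly exceeds $1$ throughout the admissible region, and minimizing $F$ there while inserting the explicit $(\lambda, \delta)$ from the sieve literature should recover the constant $193/192 - \epsilon$. The principal obstacle is precisely the quantitative small-gaps input: a qualitative $\liminf$ statement about a single index is insufficient, and one genuinely needs to \emph{count} primes with small gaps, which is where the bilinear/sieve machinery of Bombieri--Davenport and its descendants is unavoidable. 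Once that input is in hand, the remaining optimization and error-term bookkeeping are elementary.
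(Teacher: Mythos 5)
The paper does not prove this theorem; it is quoted from Cheer and Goldston \cite{CG} purely as evidence that the naive guess $f(x)=x\log x$ in (\ref{eq:asym}) cannot hold, so there is no in-paper proof to compare with. Read on its own, your two-class Cauchy--Schwarz computation is algebraically sound: with $|S|=\alpha N$ and $\sigma=sx$ subject to $s\le\lambda\alpha$, the lower bound becomes $F(\alpha,\lambda\alpha)=1+\alpha(1-\lambda)^2/(1-\alpha)$, which strictly exceeds $1$ and increases in $\alpha$, so the worst admissible case is $\alpha=\delta$. This deviation mechanism is essentially Erd\H{o}s's 1940 argument (cited as \cite{E}) and does yield $\sum d_i^2\ge(1+c)x\log x$ for some inexplicit absolute $c>0$.

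The concrete gap is in your quantitative input. The Bombieri--Davenport and Huxley results are $\liminf$ statements: their sieve machinery counts pairs of (not necessarily consecutive) primes lying close together, and converting that into a fixed positive \emph{fraction} of \emph{consecutive} gaps below $\lambda\log x$, with explicit $(\lambda,\delta)$ that after optimization produce precisely $193/192$, is nontrivial and is exactly the content that \cite{CG} supply. Moreover, the Cheer--Goldston paper (``Longer than average intervals containing no primes'') approaches the problem from the opposite side, essentially via the identity $\tfrac12\sum d_i^2=\int_0^\infty \#\{n\le x : (n,n+h]\text{ contains no prime}\}\,dh$ together with second-moment estimates for $\psi(y+h)-\psi(y)-h$, i.e.\ by counting \emph{long} prime-free stretches rather than small gaps. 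So your route is a reasonable heuristic and would plausibly deliver some constant strictly above $1$, but the key input is mis-sourced and, as stated, not strong enough to reach $193/192$, and the path is not the one \cite{CG} actually take.
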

(Note, as seen from Lemma \ref{lem:basicBound} below, the contribution to (\ref{eq:asym}) from prime powers is of lower order.) This does not rule out the possibility of $f(x) = Cx\log(x)$ for some larger $C$, for example.

\section{Preliminaries}\label{sec:prelim}
We first recall some standard definitions. When $q$ is a positive integer, $\phi(q)$ denotes the Euler totient function. For positive integers $a$ and $q$, $(a,q)$ denotes the g.c.d. of $a$ and $q$.

For a positive real number $x$, we define
\[\psi(x) = \sum_{n \leq x} \Lambda(n) = \sum_{p^{\alpha} \leq x} \log(p),\]
\[\theta(x) = \sum_{p \leq x} \log (p).\]
Here, $\log$ denotes the natural logarithm. The latter sum for $\psi$ is over prime powers $p^{\alpha}$, while the sum for $\theta$ is over primes $p$. $\Lambda(n)$ denotes the von Mangoldt function, which is equal to $\log(p)$ whenever $n$ is a power of a prime $p$ and equal to 0 otherwise.

Letting $a$ and $q$ be positive integers, we similarly define
\[\psi(x;q,a) = \sum_{\substack{n \leq x \\ n \equiv a \mod q}} \Lambda(n),\; \; \; \theta(x; q,a) = \sum_{\substack{p\leq x \\ p \equiv a \mod q}} \log (p).\]
Letting $I$ be an interval, we also define
\[\psi(I;q,a) = \sum_{\substack{n \in I \\ n \equiv a \mod q}} \Lambda(n), \; \; \; \theta(I; q,a) = \sum_{\substack{p \in I \\ p \equiv a \mod q}} \log (p).\]
The size of an interval $I$ is defined to be the number of integers it contains, and is denoted by $|I|$.
For a fixed positive real number $x$, we let $\mathcal{P}_x$ denote the set of all partitions of $[1, \lfloor x \rfloor]$ into intervals. Thus an element $\pi \in \mathcal{P}_x$ is a collection of disjoint intervals whose union is the interval from 1 to $\lfloor x \rfloor$.

We recall the prime number theorem, which states that $\psi(x) \sim x$. We will later also use the following standard fact (we include the short proof here for completeness):
\begin{lemma}\label{lem:basicBound} For $x \geq 2$, $\theta(x) = \psi(x) + O(x^{\frac{1}{2}})$.
\end{lemma}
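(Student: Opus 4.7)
The plan is to expand $\psi(x)$ by separating out the contributions from primes versus proper prime powers. By definition
\[\psi(x) = \sum_{p^{\alpha} \leq x} \log(p) = \sum_{\alpha \geq 1} \sum_{p \leq x^{1/\alpha}} \log(p) = \sum_{\alpha \geq 1} \theta(x^{1/\alpha}),\]
so that
\[\psi(x) - \theta(x) = \sum_{\alpha \geq 2} \theta(x^{1/\alpha}),\]
where the sum is effectively truncated at $\alpha \leq \log_2(x)$, since $\theta(y) = 0$ for $y < 2$.

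The task is then to bound this tail. I would use the trivial Chebyshev-type bound $\theta(y) \ll y$ (which can be derived elementarily from binomial coefficient estimates, or simply cited). Applied to $\alpha = 2$ this gives $\theta(x^{1/2}) \ll x^{1/2}$, which accounts for the main term of the error. For $\alpha \geq 3$ we have $\theta(x^{1/\alpha}) \leq \theta(x^{1/3}) \ll x^{1/3}$, and there are only $O(\log x)$ such terms, contributing $O(x^{1/3} \log x) = O(x^{1/2})$.

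There is no real obstacle here; the statement is essentially a bookkeeping exercise. The only point that requires any thought is the uniform bound $\theta(y) \ll y$, but this is standard (and in any case the lemma is labeled as a recollection of a known fact). Combining the two ranges of $\alpha$ yields $\psi(x) - \theta(x) \ll x^{1/2}$, as claimed.
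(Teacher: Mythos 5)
Your proof is correct and follows essentially the same route as the paper: expand $\psi(x) = \sum_{\alpha \geq 1} \theta(x^{1/\alpha})$, isolate the $\alpha = 2$ term as the dominant $O(x^{1/2})$ contribution, and bound the $O(\log x)$ remaining terms with $\alpha \geq 3$ by $O(x^{1/3}\log x)$. The only cosmetic difference is that the paper reaches $\theta(y) \ll y$ via $\theta(y) \leq \psi(y) \ll y$ rather than citing Chebyshev directly.
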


\begin{proof} Since $p^{\alpha} \leq x$ holds if and only if $p  \leq x^{\frac{1}{\alpha}}$, we have
\[\psi(x) = \sum_{\alpha =1}^{\infty} \theta \left(x^{\frac{1}{\alpha}}\right) \text{   and   }
\psi(x) - \theta(x) = \sum_{\alpha \geq 2} \theta \left(x^{\frac{1}{\alpha}}\right).\]
Noting that $x^{\frac{1}{\alpha}} \geq 2$ only for $\alpha = O(\log x)$ and $\theta(x^{\frac{1}{\alpha}}) \leq \psi(x^{\frac{1}{\alpha}}) \ll x^{\frac{1}{\alpha}}$, we see this is $\ll x^{\frac{1}{2}} + x^{\frac{1}{3}} \log x \ll x^{\frac{1}{2}}$.
\end{proof}

\section{A Variational Form of the Barban-Davenport-Halberstam \\ Theorem}
We now prove:

\begin{thm4} Let $A > 0$. For all positive real numbers $x$ and $Q$ satisfying $x (\log(x))^{-A} \leq Q \leq x$,
\[\sum_{q \leq Q} \sum_{\substack{ a\leq q \\ (a,q) = 1}} \max_{\pi \in \mathcal{P}_x} \sum_{I \in \pi} \left( \theta(I; q, a) - \frac{|I|}{\phi(q)}\right)^2 \ll_A xQ \log^3(x).\]
\end{thm4}

We will deduce this by combining the proof of the standard Barban-Davenport-Halberstam theorem with some combinatorial arguments and a variational form of the Siegel-Walfisz Theorem that is developed in the following subsection.

For a fixed positive integer $q$, we consider Dirichlet characters modulo $q$. A function $\chi: \Z_q^* \rightarrow \mathbb{C}$ is called a Dirichlet character modulo $q$ if it is a group homomorphism. We can extend such a $\chi$ to be a function from $\Z$ to $\mathbb{C}$ by defining $\chi(n)$ to be equal to the value of the character on the residue class of $n$ modulo $q$ when $n$ is coprime to $q$ and 0 otherwise. From now on, we will consider Dirichlet characters to be functions on $\Z$. A character $\chi$ mod $q$ is said to be \emph{primitive} if its period as a function on $\Z$ is precisely $q$ (conversely it is non-primitive if it has a smaller period dividing $q$).

We fix positive integers $M$ and $N$. Given a Dirchlet character $\chi$ mod $q$ and complex numbers $\{a_n\}_{n=M+1}^{M+N}$, we define
\[T(\chi) = \sum_{n=M+1}^{M+N} a_n \chi(n).\] More generally, for any interval $I\subseteq [M+1,M+N]$, we define
\[T(\chi,I) = \sum_{n \in I} a_n \chi(n).\]

The large sieve inequality \cite{D} states:
\begin{theorem}\label{thm:standardLargeSieve} (The Large Sieve Inequality)
For any positive integers $Q, M, N$ and complex numbers $\{a_n\}_{n=M+1}^{M+N}$:
\[
\sum_{q \leq Q} \frac{q}{\phi(q)} \sideset{}{^*}\sum_{\chi \mod q} |T(\chi)|^2 \ll (N + Q^2) \sum_{n=M+1}^{M+N} |a_n|^2.
\]
Here, the the inner sum $\sum_{\chi}^*$ is over the \textbf{primitive} characters modulo $q$ (this is what the $*$ superscript signifies).
\end{theorem}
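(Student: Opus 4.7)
The plan is the classical two-step reduction: pass from the character bilinear form to an additive bilinear form at Farey fractions via Gauss sums, and then invoke the analytic (``Farey'') large sieve on $\delta$-separated points.

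For the first step, if $\chi$ is a primitive character mod $q$, the Gauss sum identity $\tau(\bar\chi)\chi(n)=\sum_{b=1}^{q}\bar\chi(b)e^{2\pi i bn/q}$ together with $|\tau(\bar\chi)|^2=q$ gives
\[q\,|T(\chi)|^2 \;=\; \Bigl|\sum_{\substack{b\le q\\(b,q)=1}}\bar\chi(b)\,S(b/q)\Bigr|^2,\qquad S(\alpha):=\sum_{n=M+1}^{M+N}a_n e^{2\pi i \alpha n}.\]
Summing over primitive $\chi$ mod $q$, enlarging the range to all characters mod $q$ (legal since the summand is non-negative), expanding the square, and applying the orthogonality relation $\sum_{\chi \bmod q}\bar\chi(a)\chi(b)=\phi(q)\,\mathbf{1}[a\equiv b\bmod q,\,(a,q)=1]$ yields
\[\frac{q}{\phi(q)}\sideset{}{^*}\sum_{\chi \bmod q}|T(\chi)|^2 \;\le\; \sum_{\substack{b\le q\\(b,q)=1}}|S(b/q)|^2.\]
Summing over $q\le Q$ thus reduces the theorem to the additive estimate $\sum_{q\le Q}\sum_{(b,q)=1}|S(b/q)|^2\ll(N+Q^2)\sum_{n}|a_n|^2$.

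For the second step, observe that distinct Farey fractions $b/q,\;b'/q'$ with $q,q'\le Q$ satisfy $|b/q-b'/q'|\ge 1/(qq')\ge Q^{-2}$, so the full collection of such fractions is $\delta$-separated on $\mathbb{R}/\mathbb{Z}$ with $\delta=Q^{-2}$. It therefore suffices to prove the analytic large sieve: for any $\delta$-separated points $\alpha_1,\dots,\alpha_R\in\mathbb{R}/\mathbb{Z}$,
\[\sum_{r=1}^{R}|S(\alpha_r)|^2 \;\le\; (N+\delta^{-1})\sum_{n=M+1}^{M+N}|a_n|^2,\]
which delivers the desired $(N+Q^2)\sum|a_n|^2$ upon substituting $\delta=Q^{-2}$.

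The main obstacle is this analytic inequality with \emph{linear} dependence on both $N$ and $\delta^{-1}$ (as opposed to a weaker $(N+\delta^{-1})^{1+\epsilon}$). I would prove it by Selberg's extremal majorant method: construct an entire function $F$ of exponential type at most $2\pi$ satisfying $F(n)\geq\mathbf{1}_{[M+1,M+N]}(n)$ for every $n\in\mathbb{Z}$ and $\int_{\mathbb{R}}F(t)\,dt=N+O(1)$. One then bounds $\sum_{r}|S(\alpha_r)|^2$ by expanding the squares, inserting the majorant to handle the diagonal terms via Plancherel, and controlling the off-diagonal contributions through a Montgomery--Vaughan Hilbert-type inequality on sums of the form $\sum_{r\ne s}c_r\bar c_s/(\alpha_r-\alpha_s)$, producing the sharp constant $N+\delta^{-1}$. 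Once this analytic form is in hand, steps one and two combine formally to yield Theorem~\ref{thm:standardLargeSieve}.
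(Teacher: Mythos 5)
The paper cites Theorem~\ref{thm:standardLargeSieve} as a standard result from \cite{D} and does not supply a proof, so there is no paper proof to compare against directly. Your outline, however, is the classical argument and it is exactly the template that the paper \emph{does} follow in proving the variational analogues: your step one (Gauss sums, $|\tau(\bar\chi)|^2 = q$, enlarging from primitive to all characters, orthogonality) is precisely the reduction carried out in the proof of Lemma~\ref{lem:mid}, and your step two (Farey fractions are $Q^{-2}$-separated, then a $\delta$-separated analytic large sieve via Selberg/Fej\'er extremal functions) is exactly Lemma~\ref{lem:analyticLS}. The arithmetic in your reduction is correct: for primitive $\chi$ one gets $\tfrac{q}{\phi(q)}\sum^{*}_{\chi}|T(\chi)|^2 \le \sum_{(b,q)=1}|S(b/q)|^2$, and summing over $q\le Q$ reduces everything to the analytic inequality at the Farey points.

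Two small imprecisions in your sketch of the final analytic step. First, the majorant $K$ should have exponential type about $2\pi\delta$ (so that $\widehat{K}$ is supported in $[-\delta,\delta]$), not type $2\pi$; the type must shrink with the separation $\delta$ for the disjointness of the $\delta$-neighborhoods of the $\alpha_r$ to be used. Second, you conflate two distinct routes to the sharp constant $N+\delta^{-1}$: the Selberg--Vaaler extremal-majorant argument (write $a_n = (a_n/k(n))\,k(n)$, push $\hat{k}$ inside, apply Cauchy--Schwarz and Plancherel over the disjoint $\delta$-windows --- this is what the paper does in Lemma~\ref{lem:analyticLS}) and the Montgomery--Vaughan argument via the generalized Hilbert inequality $\sum_{r\ne s}c_r\bar c_s/(\alpha_r-\alpha_s)$. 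Either works, but they are different proofs and one does not normally need both; as stated, your final paragraph mixes their ingredients. With that cleaned up, the proposal is a correct and standard proof of the theorem and is methodologically aligned with the paper.
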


In our proof of Theorem \ref{thm:VarBHD}, we will use the large sieve inequality directly as it is stated above. However, we will later establish variational versions of this in Sections \ref{sec:VarBHD2} and \ref{sec:varLargeSieve}.

\subsection{A Variational Form of the Siegel-Walfisz Theorem}
For a positive real number $x$ and a Dirichlet character $\chi \mod q$, we define
\[\psi(x,\chi) = \sum_{p^{\alpha} \leq x} \chi(p^{\alpha}) \log(p), \text{   and   } \theta(x, \chi) = \sum_{p \leq x} \chi(p) \log(p).\]
For an interval $I$, we similarly define
\[\theta(I,\chi) = \sum_{ p \in I} \chi(p) \log(p).\]
We refer to the unique $\chi \mod q$ that takes the value 1 on all integers coprime to $q$ as the \emph{principal} character modulo $q$, and all other characters as non-principal.

The Siegel-Walfisz Theorem \cite{D} states:
\begin{theorem}\label{thm:standardSW}(Siegel-Walfisz Theorem)
Let $A$ be a positive real number. Then there exists some positive constant $c_A$ depending only on $A$ such that
\[|\psi(x,\chi)| \ll_A x e^{-c_A \log^{\frac{1}{2}} (x)}\]
for all non-principal characters $\chi \mod q$ for all moduli $q\leq \log^A(x)$.
\end{theorem}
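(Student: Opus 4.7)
The plan is to reduce the bound on $\psi(x,\chi)$ to the distribution of the non-trivial zeros of the Dirichlet $L$-function $L(s,\chi)$ via the explicit formula, and then to exploit classical zero-free regions together with Siegel's theorem on exceptional real zeros.

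First, I would apply the standard truncated explicit formula, obtained by applying Perron's formula to $-L'(s,\chi)/L(s,\chi)$ and shifting the contour past the non-trivial zeros. For $\chi$ non-principal modulo $q$ and any $T \geq 2$,
\[\psi(x,\chi) = -\sum_{|\gamma|\leq T} \frac{x^\rho}{\rho} + O\!\left(\frac{x\log^2(qxT)}{T} + \log(qx)\right),\]
where the sum runs over non-trivial zeros $\rho = \beta + i\gamma$ of $L(s,\chi)$ and the $s=1$ pole of $-L'/L$ is absent because $\chi$ is non-principal. Next I would invoke the classical zero-free region: there is an absolute $c > 0$ such that $L(s,\chi) \neq 0$ for $\sigma > 1 - c/\log(q(|t|+2))$, with at most one possible exception, a real simple zero $\beta_1 < 1$ arising only for real $\chi$. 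Combined with the zero-count $N(T,\chi) \ll T\log(qT)$, the contribution of the non-exceptional zeros is $O(x^{1-c/\log(qT)}\log^2(qT))$. Choosing $T = \exp(\log^{1/2}(x))$ and using $q \leq \log^A(x)$ gives $\log(qT) \ll \log^{1/2}(x)$, so this contribution, together with the truncation error, is $\ll x \exp(-c'\log^{1/2}(x))$.

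The remaining difficulty, which I expect to be the main obstacle, is the possible Siegel zero $\beta_1$ attached to a real character: naively its contribution $x^{\beta_1}/\beta_1$ could be as large as $x$. To defeat it, I would invoke Siegel's theorem, which states that for every $\epsilon > 0$ there exists an ineffective $C(\epsilon) > 0$ with $1-\beta_1 \geq C(\epsilon)\, q^{-\epsilon}$ for every real zero of $L(s,\chi)$ with $\chi$ real modulo $q$. Taking $\epsilon = 1/(4A)$ small in terms of $A$ and using $q \leq \log^A(x)$ yields $1-\beta_1 \gg_A \log^{-1/4}(x)$, so $x^{\beta_1} \ll x \exp(-(1-\beta_1)\log x) \ll x \exp(-c_A \log^{1/2}(x))$. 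Combining with the non-exceptional contribution gives the stated bound. Zero-free regions alone would suffice only for $q$ growing much more slowly than $\log^A(x)$; Siegel's (ineffective) estimate is precisely what purchases uniformity all the way up to $q \leq \log^A(x)$, and is the source of the ineffectiveness of the constant $c_A$.
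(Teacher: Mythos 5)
Your proposal is a correct outline of the classical proof of the Siegel--Walfisz theorem. The paper itself does not prove this statement; it cites it as a standard result from Davenport's \emph{Multiplicative Number Theory}, and the argument you describe---truncated explicit formula, the zero-free region $\sigma > 1 - c/\log(q(|t|+2))$ with at most one exceptional real zero, the zero-count $N(T,\chi) \ll T\log(qT)$, the choice $T = \exp(\log^{1/2} x)$, and Siegel's (ineffective) bound $1-\beta_1 \geq C(\epsilon)q^{-\epsilon}$ with $\epsilon$ chosen small relative to $A$---is precisely the one given there. Your accounting of the exponents is right: with $q \leq \log^A x$ one gets $\log(qT) \ll_A \log^{1/2} x$, so the non-exceptional zeros and the truncation error are both $\ll x\exp(-c'\log^{1/2} x)$, and with $\epsilon = 1/(4A)$ the Siegel zero contributes $x^{\beta_1} \ll_A x\exp(-C\log^{3/4} x)$, which is more than sufficient. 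You also correctly identify the Siegel zero as the reason the constant $c_A$ is ineffective.
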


We will find it more convenient to work with the following corollary:
\begin{corollary}\label{cor:SW}
Let $A$ be a positive real number. Then there exists some positive constant $c_A$ depending only on $A$ such that
\[|\theta(x,\chi)| \ll_A x e^{-c_A \log^{\frac{1}{2}} (x)}\]
for all non-principal characters $\chi \mod q$ for all moduli $q\leq \log^A(x)$.
\end{corollary}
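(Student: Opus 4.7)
The plan is to deduce the statement for $\theta(x,\chi)$ from Theorem \ref{thm:standardSW} (Siegel-Walfisz for $\psi(x,\chi)$) together with an estimate on the difference $\psi(x,\chi) - \theta(x,\chi)$ that is of the same flavor as Lemma \ref{lem:basicBound}. By the triangle inequality,
\[|\theta(x,\chi)| \leq |\psi(x,\chi)| + |\psi(x,\chi) - \theta(x,\chi)|,\]
so it suffices to show the contribution from prime powers of exponent $\geq 2$ is absorbed by the main Siegel-Walfisz bound.

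First I would write
\[\psi(x,\chi) - \theta(x,\chi) = \sum_{\alpha \geq 2}\,\sum_{p \leq x^{1/\alpha}} \chi(p^{\alpha})\log(p).\]
Using $|\chi(p^{\alpha})| \leq 1$, the inner sum is bounded in absolute value by $\theta(x^{1/\alpha}) \leq \psi(x^{1/\alpha}) \ll x^{1/\alpha}$. As in the proof of Lemma \ref{lem:basicBound}, only $\alpha = O(\log x)$ terms contribute a nonzero summand, so the total is $\ll x^{1/2} + x^{1/3}\log(x) \ll x^{1/2}$.

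Next I would invoke Theorem \ref{thm:standardSW} directly: for any non-principal $\chi \mod q$ with $q \leq \log^A(x)$, there is a constant $c_A$ with $|\psi(x,\chi)| \ll_A x e^{-c_A \log^{1/2}(x)}$. Combining with the previous step yields $|\theta(x,\chi)| \ll_A x e^{-c_A \log^{1/2}(x)} + O(x^{1/2})$.

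The only remaining point is that the error $x^{1/2}$ is negligible compared with the Siegel-Walfisz bound. This holds because $x^{1/2} = xe^{-\frac{1}{2}\log x}$ and $\frac{1}{2}\log x \geq c_A \log^{1/2}(x)$ for all sufficiently large $x$ (depending only on $A$), so $x^{1/2} \ll x e^{-c_A \log^{1/2}(x)}$ after possibly shrinking $c_A$ or enlarging the implicit constant. For bounded $x$ the bound is trivial. There is no real obstacle here: the argument is a routine passage from $\psi$ to $\theta$, and the prime-power tail is easily dominated by the exponential savings in Siegel-Walfisz.
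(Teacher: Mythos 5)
Your proof is correct and follows essentially the same route as the paper: apply the triangle inequality to reduce to $\psi(x,\chi)$ plus the prime-power tail, bound the tail by $\psi(x)-\theta(x)\ll x^{1/2}$ (you re-derive Lemma \ref{lem:basicBound} inline where the paper cites it directly), and absorb the $x^{1/2}$ into the Siegel--Walfisz bound. No meaningful difference in approach.
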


\begin{proof} By the triangle inequality, $|\theta(x,\chi)| \leq |\psi(x,\chi)| + |\psi(x,\chi) - \theta(x,\chi)|$. The first quantity is bounded by Theorem \ref{thm:standardSW}. To bound the second quantity, we observe
\[|\psi(x,\chi) - \theta(x,\chi)| = \left| \sum_{\substack{p^{\alpha} \leq x \\ \alpha > 1}} \chi(p^{\alpha}) \log(p)\right| \leq \sum_{\substack{p^{\alpha} \leq x \\ \alpha>1}} \log(p) = \psi(x) - \theta(x),\]
by the triangle inequality and the fact that $|\chi(p^{\alpha})|$ is always either 0 or 1. Applying Lemma \ref{lem:basicBound}, we see that $|\psi(x,\chi) - \theta(x,\chi)| \ll x^{\frac{1}{2}}$, where the implicit constant is independent of $q$ and $\chi$.
\end{proof}

We now prove a variational form of this:
\begin{lemma}\label{lem:VarSW}
Let $A$ be a positive real number. Then there exists some positive constant $c'_A$ depending only on $A$ such that
\begin{equation}\label{eq:VarSW}
\sqrt{ \max_{\pi \in \mathcal{P}_x} \sum_{I \in \pi} \left|\theta(I,\chi)\right|^2} \ll_A x e^{-c'_A \log^{\frac{1}{2}}(x)}
\end{equation}
for all non-principal characters $\chi \mod q$ for all moduli $q\leq \log^A(x)$.
\end{lemma}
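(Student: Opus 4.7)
The plan is to deduce Lemma 10 from Corollary 9 by an elementary $\ell^\infty$--$\ell^1$ interpolation, so the real content is already in the standard (non-variational) bound. The key observation is that for any interval $I = (m,n] \subseteq [1,x]$ one has
\[
\theta(I,\chi) = \theta(n,\chi) - \theta(m,\chi),
\]
and hence by Corollary \ref{cor:SW},
\[
\max_{I \subseteq [1,x]} |\theta(I,\chi)| \leq 2 \max_{y \leq x} |\theta(y,\chi)| \ll_A x e^{-c_A \log^{1/2}(x)}
\]
for any non-principal $\chi$ mod $q$ with $q \leq \log^A(x)$.

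Next, for any partition $\pi \in \mathcal{P}_x$, the triangle inequality and the fact that $|\chi(p)| \leq 1$ give the uniform $\ell^1$ bound
\[
\sum_{I \in \pi} |\theta(I,\chi)| \leq \sum_{I \in \pi} \sum_{p \in I} \log(p) = \theta(x) \ll x,
\]
the last estimate being Chebyshev's bound (or the prime number theorem). Combining these two estimates via the trivial inequality $\sum_I |\theta(I,\chi)|^2 \leq \bigl(\max_I |\theta(I,\chi)|\bigr)\bigl(\sum_I |\theta(I,\chi)|\bigr)$, one obtains for every partition $\pi$
\[
\sum_{I \in \pi} |\theta(I,\chi)|^2 \ll_A x \cdot x\, e^{-c_A \log^{1/2}(x)} = x^2 e^{-c_A \log^{1/2}(x)}.
\]
Taking the square root and setting $c'_A := c_A / 2$ yields the stated inequality uniformly in $\pi$, and therefore also for the maximum over $\pi \in \mathcal{P}_x$.

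There is no real obstacle here: the argument is just that the $V^2$ norm is controlled by $\sqrt{V^\infty \cdot V^1}$, the $V^\infty$ piece is the classical Siegel--Walfisz bound (with a small loss of a factor of $2$ for endpoints of intervals), and the $V^1$ piece is the trivial observation that the absolute sums telescope against $\theta(x)$. The only care needed is to confirm the constant $c_A/2$ is acceptable (any positive fraction of $c_A$ would do) and to note that the argument makes no use of $\chi$ being primitive, only that it is non-principal, which is exactly the hypothesis of Corollary \ref{cor:SW}.
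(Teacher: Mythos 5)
Your decomposition is exactly the one the paper uses: bound the $V^2$ quantity by $\sqrt{V^1 \cdot V^\infty}$, control the $\ell^1$ piece by $\theta(x)\ll x$, and control the $\ell^\infty$ piece via Siegel--Walfisz. But there is a genuine gap in the $\ell^\infty$ step. You write
\[
\max_{y\leq x}|\theta(y,\chi)|\ll_A x e^{-c_A\log^{1/2}(x)}
\]
as though it followed directly from Corollary \ref{cor:SW}, but that corollary only bounds $|\theta(y,\chi)|$ under the hypothesis $q\leq \log^A(y)$, whereas your standing assumption is only $q\leq\log^A(x)$. For $y$ substantially smaller than $x$ this hypothesis can fail --- indeed if $q$ is near $\log^A(x)$, the condition $q\leq\log^A(y)$ fails for \emph{every} $y<x$ --- so the corollary (with parameter $A$) says nothing about most of the range $y\leq x$, and the trivial bound $|\theta(y,\chi)|\ll y$ is useless when $y$ is close to $x$. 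The paper repairs this by invoking Corollary \ref{cor:SW} with the \emph{larger} parameter $2A$: then the bad range is $\log^{2A}(y)<q\leq\log^A(x)$, which forces $y<e^{q^{1/(2A)}}\leq e^{\log^{1/2}(x)}$, and on that short range the trivial bound $|\theta(y,\chi)|\ll y\leq e^{\log^{1/2}(x)}$ is far below $xe^{-c_{2A}\log^{1/2}(x)}$. Your proof needs exactly this parameter-doubling and case split to close; without it the claim that $\max_{y\leq x}|\theta(y,\chi)|$ is uniformly small is unjustified.
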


\begin{proof}
Since every $I \in \pi$ is a subinterval of $[1,x]$, the left hand side of (\ref{eq:VarSW}) is
\begin{equation}\label{eq:VarSW1}
\ll \sqrt{\max_{\pi \in \mathcal{P}_x} \sum_{I \in \pi} |\theta(I,\chi)|\cdot \max_{J \subseteq [1,x]} |\theta(J, \chi)|}.
\end{equation}

We consider the inner quantity $\sum_{I \in \pi} |\theta(I, \chi)|$. By definition, we have
\[\sum_{I \in \pi} |\theta(I,\chi)| = \sum_{I \in \pi} \left| \sum_{p \in I} \chi(p) \log(p)\right|.\]
Applying the triangle inequality, this is
\[\leq \sum_{I \in \pi} \sum_{p \in I} \left| \chi(p) \log(p)\right| \leq \sum_{I \in \pi} \sum_{ p \in I} \log(p) = \theta(x).\]
Here, we have used the fact that $|\chi(p)|$ is always either 1 or 0.

We then have that (\ref{eq:VarSW1}) is
\[\ll \sqrt{\theta(x) \max_{J \subseteq [1,x]} |\theta(J,\chi)|} .\]
Since $\theta(x) \leq \psi(x) \ll x$, this is
\begin{equation}\label{eq:VarSW2}
\ll \sqrt{x \max_{J \subseteq [1,x]} |\theta(J,\chi)|}.
\end{equation}

We consider the quantity $\max_{J \subseteq [1,x]} |\theta(J,\chi)|$. We observe that this is $\ll \max_{ y \leq x} |\theta(y,\chi)|$. We will upper bound this quantity for each $y$ separately. For larger $y$ values, we will employ Corollary \ref{cor:SW} for the value $2A$ (using $2A$ instead of $A$ will allow us to apply the corollary to a larger range of $y$ values). We let $c_{2A}$ denote the constant for $2A$ in the exponent. More precisely, for $y$ such that $q \leq \log^{2A}(y)$, we have $|\theta(y,\chi)| \ll_A y e^{-c_{2A} \log^{\frac{1}{2}}(y)}$ by Corollary \ref{cor:SW}. Since $y \leq x$, this is $\ll_A x e^{-c_{2A} \log^{\frac{1}{2}}(x)}$.

We now consider $y$ such that $\log^{2A}(y) \leq q$. This is equivalent to the condition $y \leq e^{q^{\frac{1}{2A}}}$. For these small $y$ values, we will use the basic estimate $|\theta(y,\chi)| \ll \theta(y) \ll y$. Since $\log^A(x)\geq q$ holds by assumption, we have $\log(x) \geq q^{\frac{1}{A}} \geq \log^2(y)$. We then have
$y \leq e^{\log^{\frac{1}{2}}(x)} \ll_A xe^{-c_{2A} \log^{\frac{1}{2}}(x)}.$
Hence,
\[\max_{ y \leq x} |\theta(y,\chi)| \ll_A xe^{-c_{2A}\log^{\frac{1}{2}}(x)}.\]

Thus, the quantity in (\ref{eq:VarSW2}) is \[\ll_A \sqrt{x^2 e^{-c_{2A} \log^{\frac{1}{2}}(x)}} = xe^{- \frac{1}{2}\cdot c_{2A} \log^{\frac{1}{2}}(x)}.\]
This proves Lemma \ref{lem:VarSW} with $c'_A := \frac{1}{2} \cdot c_{2A}$.
\end{proof}

We note that, conditional on the generalized Riemann hypothesis, for a nonprincipal Dirichlet of modulus $q$ one has the bound
\[|\psi(x, \chi)| \ll x^{1/2}\log(x) \log(qx)  \]
where the implied constant is absolute (see Theorem 13.7 in \cite{MV}). This can be used as in the argument above to obtain:

\begin{lemma}Let $\chi$ be a nonprincipal character mod $q$. Assuming the generalized Riemann hypothesis, we have that
\begin{equation}\label{eq:VarGRHSW1}
\max_{\pi \in \mathcal{P}_x} \sum_{I \in \pi} \left|\theta(I,\chi)\right|^2 \ll x^{3/2}\log(x) \log(qx).
\end{equation}
\end{lemma}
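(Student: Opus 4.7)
The plan is to mimic the proof of Lemma~\ref{lem:VarSW} step for step, simply swapping out the Siegel--Walfisz input (Corollary~\ref{cor:SW}) for the GRH-conditional bound $|\psi(y,\chi)| \ll y^{1/2}\log(y)\log(qy)$ recalled just before the lemma statement. First I would use the elementary inequality
\[
\sum_{I\in\pi} |\theta(I,\chi)|^2 \;\le\; \Bigl(\max_{I\in\pi} |\theta(I,\chi)|\Bigr)\cdot \sum_{I\in\pi} |\theta(I,\chi)|,
\]
valid for any $\pi \in \mathcal{P}_x$, and then dominate $\max_{I\in\pi}|\theta(I,\chi)|$ by $\max_{J \subseteq [1,x]}|\theta(J,\chi)|$. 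The second factor is handled exactly as in Lemma~\ref{lem:VarSW}: applying the triangle inequality and using $|\chi(p)|\le 1$ gives
\[
\sum_{I\in\pi}|\theta(I,\chi)| \;\le\; \sum_{I\in\pi}\sum_{p\in I}\log(p) \;=\; \theta(x) \;\ll\; x.
\]

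The only remaining task is to bound $\max_{J\subseteq[1,x]}|\theta(J,\chi)|$ under GRH. Writing $J=[y_1,y_2]$ with $y_1\le y_2 \le x$, we have $\theta(J,\chi)=\theta(y_2,\chi)-\theta(y_1-1,\chi)$. The same argument used to deduce Corollary~\ref{cor:SW} from Theorem~\ref{thm:standardSW} (namely, $|\psi(y,\chi)-\theta(y,\chi)| \le \psi(y)-\theta(y) \ll y^{1/2}$ by Lemma~\ref{lem:basicBound}) converts the assumed GRH-conditional bound on $|\psi(y,\chi)|$ into the same bound on $|\theta(y,\chi)|$, with $y \le x$. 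Hence
\[
\max_{J \subseteq [1,x]} |\theta(J,\chi)| \;\ll\; x^{1/2}\log(x)\log(qx).
\]

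Multiplying the two factors together gives
\[
\max_{\pi\in\mathcal{P}_x}\sum_{I\in\pi}|\theta(I,\chi)|^2 \;\ll\; x \cdot x^{1/2}\log(x)\log(qx) \;=\; x^{3/2}\log(x)\log(qx),
\]
as required. There is no real obstacle here: the GRH bound is uniform in $y\le x$, so unlike the unconditional case in Lemma~\ref{lem:VarSW} no separate small-$y$ analysis is needed, and the factorization-by-$\ell^\infty$-times-$\ell^1$ trick that drove the unconditional proof goes through verbatim.
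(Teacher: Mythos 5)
Your proposal is correct and follows exactly the route the paper has in mind: the paper proves Lemma~\ref{lem:VarSW} via the factorization $\sum_I |\theta(I,\chi)|^2 \le \bigl(\max_{J\subseteq[1,x]}|\theta(J,\chi)|\bigr)\cdot\sum_I|\theta(I,\chi)|$, bounds the second factor by $\theta(x)\ll x$, and the first by the relevant character-sum estimate, and then simply remarks that the GRH-conditional lemma follows by substituting the bound $|\psi(y,\chi)|\ll y^{1/2}\log(y)\log(qy)$ into that same argument. You carried out precisely that substitution, correctly noting that the uniformity of the GRH bound in $y\le x$ makes the small-$y$ case split (needed in the Siegel--Walfisz version because of the restriction $q\le\log^A y$) unnecessary.
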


This could be used in place of Lemma \ref{lem:VarSW} in the following arguments to conditionally extend the range of $Q$ in the statements of Theorems \ref{thm:VarBHD} and \ref{thm:VarBHD2}. This is quite routine, and we omit the details. It may be possible to further improve (conditionally) the exponent of the $x^{3/2}$ term. We leave this as an interesting open problem.

\subsection{Proof of Theorem \ref{thm:VarBHD}}
We now bound the quantity
\begin{equation}\label{eq:VarBHD1}
\sum_{q \leq Q} \sum_{\substack{a \leq q \\ (a,q) = 1}} \max_{\pi \in \mathcal{P}_x} \sum_{I \in \pi} \left( \theta(I; q,a) - \frac{|I|}{\phi(q)}\right)^2.
\end{equation}
The structure of our proof will resemble the proof of the non-variational version of the theorem in \cite{D}.

We let $2^k$ denote the smallest power of two that is $\geq x$. We can then decompose $[1,2^k]$ into dyadic intervals $I_{c,\ell} = ((c-1)2^\ell, c2^{\ell}]$, where $\ell$ ranges from 0 to $2^k$ and $c$ ranges from $1$ to $2^{k-\ell}$. We note the following lemma \cite{LL}:
\begin{lemma}\label{lem:binarydecomp} Any subinterval of $S \subset [1,2^{k}]$ can be expressed as the disjoint union of intervals of the form $I_{c,\ell}$, such as
\[
S = \bigcup_{m} I_{c_{m},\ell_{m}}
\]
where at most two of the intervals $I_{c_{m},\ell_{m}}$ in the union are of each size, and where the union consists of at most $2k$ intervals.
\end{lemma}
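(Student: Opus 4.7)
The plan is to prove Lemma \ref{lem:binarydecomp} by a standard segment-tree-style recursion on $k$. The central observation is that the dyadic intervals $I_{c,\ell}$ form a binary tree structure under containment: $(0, 2^k]$ splits into $(0, 2^{k-1}]$ and $(2^{k-1}, 2^k]$, each of which splits in half again, etc. Any subinterval $S$ can be ``matched'' to this tree by recursively comparing $S$ to the midpoint at each level.

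The key auxiliary fact I would establish first is: for any $0 \le a < 2^m$, the \emph{suffix} $(a, 2^m]$ of a dyadic block of length $2^m$ decomposes as a disjoint union of dyadic intervals $I_{c,\ell}$ whose sizes $2^\ell$ are all \emph{distinct} and each $\le 2^m$; the analogous statement holds for a \emph{prefix} $(0, b]$. I would prove this by induction on $m$: split $(0, 2^m]$ at its midpoint $2^{m-1}$. If $a \ge 2^{m-1}$ the suffix lies entirely in the right half, which (after a translation) is itself a suffix of a dyadic block of length $2^{m-1}$, and the inductive hypothesis applies directly. If $a < 2^{m-1}$, then $(a, 2^m] = (a, 2^{m-1}] \cup (2^{m-1}, 2^m]$; the right piece is exactly $I_{2, m-1}$, while by induction the left piece is a union of dyadic intervals with distinct sizes strictly less than $2^{m-1}$, so appending $I_{2, m-1}$ preserves distinctness.

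For the main statement, I would again induct on $k$. Given $S \subseteq (0, 2^k]$, compare $S$ to the midpoint $2^{k-1}$. If $S$ lies in a single half, translate and apply the inductive hypothesis to get a valid decomposition with at most $2(k-1) \le 2k$ intervals. Otherwise $S$ straddles the midpoint, so $S = (a, 2^{k-1}] \cup (2^{k-1}, b]$ with $0 \le a < 2^{k-1} < b \le 2^k$. Applying the suffix claim to $(a, 2^{k-1}]$ and the prefix claim to $(2^{k-1}, b]$, each piece produces a list of at most $k$ dyadic intervals of internally distinct sizes, each $\le 2^{k-1}$. Concatenating the two lists gives the desired decomposition of $S$.

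The only thing that requires a little care is the bookkeeping: we need to confirm that the combined decomposition never has more than two intervals of the same size, and never more than $2k$ intervals. Both facts drop out of the auxiliary claim automatically, since each of the two concatenated lists has internally distinct sizes (so any size appears at most twice in the union), and each list draws its sizes from the $k$-element set $\{2^0, 2^1, \ldots, 2^{k-1}\}$ (so the total count is at most $2k$). This is really the substantive content of the lemma; the rest is a clean recursive unpacking.
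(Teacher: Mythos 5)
The paper does not actually include a proof of Lemma \ref{lem:binarydecomp}: it is quoted from \cite{LL}, so there is no internal argument to compare against. Your segment-tree recursion is the standard way to prove this and the structure of your argument is right, but there is one real inconsistency that a referee would flag.

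You state the auxiliary suffix claim for all $0 \le a < 2^m$ with sizes $\le 2^m$, yet in the inductive step for $a < 2^{m-1}$ you assert that the left piece $(a, 2^{m-1}]$ decomposes into intervals of sizes \emph{strictly} less than $2^{m-1}$. That does not follow from the hypothesis you wrote down, and it is actually false when $a = 0$: in that case $(a, 2^{m-1}] = (0, 2^{m-1}] = I_{1,m-1}$, so the left list contains an interval of size $2^{m-1}$, which then collides with the appended $I_{2,m-1}$. If one pushes the recursion as literally written all the way down, $(0,2^m]$ ends up decomposed as $I_{1,0}, I_{2,0}, I_{2,1}, \dots, I_{2,m-1}$, with two intervals of size $1$. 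The patch is easy and does not change the spirit of the argument: treat $a = 0$ as a trivial base case (the suffix $(0,2^m]$ is already the single dyadic interval $I_{1,m}$) and state the claim for $0 < a < 2^m$ with sizes \emph{strictly} less than $2^m$; then the left piece in the recursion always has $a > 0$, its sizes are strictly less than $2^{m-1}$, and appending $I_{2,m-1}$ does preserve distinctness. The same correction should be made to the prefix claim. With that in place, your final bookkeeping is correct: each of the two concatenated lists has internally distinct sizes drawn from $\{2^0,\dots,2^{k-1}\}$, so each size occurs at most twice and the union has at most $2k$ intervals.
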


In other words, each $I \subseteq [x]$ can be decomposed as a disjoint union of these dyadic intervals using at most two intervals on each level $\ell$. We let $D(I)$ denote the set of dyadic intervals in the decomposition of $I$. We observe \[\theta(I; q,a) - \frac{|I|}{\phi(q)} = \sum_{J \in D(I)}\left( \theta(J; q,a) - \frac{|J|}{\phi(q)}\right)\]
for any $I$, since $\sum_{J \in D(I)} |J| = |I|$. For each $\ell$, we let $D_\ell(I)$ denote the intervals in $D(I)$ on level $\ell$ (so $|D_\ell(I)| \leq 2$).
We can rewrite (\ref{eq:VarBHD1}) as:
\[\sum_{q \leq Q} \sum_{\substack{a \leq q \\ (a,q) = 1}} \max_{\pi \in \mathcal{P}_x} \sum_{I \in \pi} \left( \sum_{\ell =0}^{2^k} \sum_{J \in D_\ell(I)} \theta(J; q,a) - \frac{|J|}{\phi(q)} \right)^2.\]

By the triangle inequality for the $\ell^2$ norm, we have
\begin{equation}\label{eq:VarBHD2}
\sqrt{\sum_{q \leq Q} \sum_{\substack{a \leq q \\ (a,q) = 1}} \max_{\pi \in \mathcal{P}_x} \sum_{I \in \pi} \left( \theta(I; q,a) - \frac{|I|}{\phi(q)}\right)^2} \ll \sum_{\ell =0}^k \left( \sum_{q \leq Q} \sum_{\substack{a \leq q \\ (a,q) =1}} \max_{\pi \in \mathcal{P}_x} \sum_{I \in \pi} \left( \sum_{J \in D_\ell(I)} \theta(J;q,a) - \frac{|J|}{\phi(q)}\right)^2 \right)^{\frac{1}{2}}.
\end{equation}
Since $|D_\ell(I)| \leq 2$ for all $\ell, I$ and each dyadic interval can appear in $D(I)$ for at most one $I \in \pi$,
\[\max_{\pi \in \mathcal{P}_x} \sum_{I \in \pi} \left(\sum_{J \in D_\ell(I)} \theta(J; q,a) - \frac{|J|}{\phi(q)}\right)^2 \ll \sum_{c=1}^{2^{k-\ell}} \left( \theta(I_{c,\ell}; q,a) - \frac{|I_{c,\ell}|}{\phi(q)}\right)^2\]
for all $a,q, \ell$. Therefore the quantity in (\ref{eq:VarBHD2}) is
\begin{equation}\label{eq:VarBHD3}
\ll \sum_{\ell=0}^k \left( \sum_{q \leq Q} \sum_{\substack{a\leq q \\ (a,q) =1}} \sum_{c=1}^{2^{k-\ell}} \left(\theta(I_{c,\ell}; q,a)- \frac{|I_{c,\ell}|}{\phi(q)}\right)^2\right)^{\frac{1}{2}}.
\end{equation}

For each fixed $\ell$, we consider the quantity
\begin{equation}\label{eq:VarBHD4}
\sum_{q \leq Q} \sum_{\substack{a\leq q \\ (a,q) =1}} \sum_{c=1}^{2^{k-\ell}} \left(\theta(I_{c,\ell}; q,a)- \frac{|I_{c,\ell}|}{\phi(q)}\right)^2.
\end{equation}
We will bound this quantity using the large sieve inequality and the Siegel-Walfisz Theorem, so we need to first express it in terms of characters $\chi$ modulo $q$. For this, we will introduce some convenient notation. Fixing $q$, we let $\chi_0$ denote the principal character modulo $q$. For any character $\chi$ modulo $q$, we define
\[\theta'(I, \chi) := \left\{
                        \begin{array}{ll}
                          \theta(I, \chi), & \hbox{$\chi \neq \chi_0$;} \\
                          \theta(I, \chi_0) - |I|, & \hbox{$\chi = \chi_0$.}
                        \end{array}
                      \right.
\]

We will employ the following lemma:
\begin{lemma}\label{lem:identity} For any interval $I$ and any coprime positive integers $q,a$,
\[\theta(I; q,a) - \frac{|I|}{\phi(q)} = \frac{1}{\phi(q)} \sum_{\chi \mod q} \overline{\chi}(a) \theta'(I,\chi),\]
where $\overline{\chi}$ denotes the character obtained from $\chi$ by complex conjugation.
\end{lemma}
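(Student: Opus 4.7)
The plan is to derive the identity from the standard orthogonality of Dirichlet characters modulo $q$, which asserts that for any integer $n$,
\[
\frac{1}{\phi(q)} \sum_{\chi \bmod q} \overline{\chi}(a)\chi(n) = \begin{cases} 1 & \text{if } n \equiv a \pmod{q} \text{ and } (n,q)=1, \\ 0 & \text{otherwise.} \end{cases}
\]
Since $(a,q)=1$ is assumed, any prime $p \equiv a \pmod{q}$ automatically satisfies $(p,q)=1$ (otherwise a common prime divisor of $p$ and $q$ would divide $a$). Therefore inserting the orthogonality relation into the definition of $\theta(I;q,a)$ and swapping the order of summation gives
\[
\theta(I;q,a) = \sum_{p \in I,\, p \equiv a \bmod q} \log(p) = \frac{1}{\phi(q)} \sum_{\chi \bmod q} \overline{\chi}(a) \sum_{p \in I} \chi(p) \log(p) = \frac{1}{\phi(q)} \sum_{\chi \bmod q} \overline{\chi}(a) \,\theta(I,\chi),
\]
where the contribution of primes with $(p,q)>1$ drops out because $\chi(p)=0$ for such primes.

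Next, I would isolate the principal character $\chi_0$. Since $(a,q)=1$, we have $\overline{\chi_0}(a)=1$, so the above identity becomes
\[
\theta(I;q,a) = \frac{1}{\phi(q)}\,\theta(I,\chi_0) + \frac{1}{\phi(q)} \sum_{\chi \neq \chi_0} \overline{\chi}(a)\,\theta(I,\chi).
\]
Subtracting $|I|/\phi(q)$ from both sides and absorbing the $-|I|$ into the principal character term using the definition $\theta'(I,\chi_0) = \theta(I,\chi_0) - |I|$, while noting $\theta'(I,\chi)=\theta(I,\chi)$ for non-principal $\chi$, gives
\[
\theta(I;q,a) - \frac{|I|}{\phi(q)} = \frac{1}{\phi(q)}\,\theta'(I,\chi_0) + \frac{1}{\phi(q)} \sum_{\chi \neq \chi_0} \overline{\chi}(a)\,\theta'(I,\chi) = \frac{1}{\phi(q)} \sum_{\chi \bmod q} \overline{\chi}(a)\,\theta'(I,\chi),
\]
which is the claimed identity. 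There is no real obstacle here; the only point to watch is the coprimality remark that allows the sum over primes $p \equiv a \bmod q$ to be replaced by the full sum over $p \in I$ weighted by $\chi(p)$, and the bookkeeping of the $-|I|$ term to make the principal character contribution fit the definition of $\theta'$.
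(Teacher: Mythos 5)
Your proof is correct and follows essentially the same route as the paper: apply the orthogonality relation for Dirichlet characters modulo $q$ to convert the congruence condition into a character sum, swap the order of summation, and then use the definition of $\theta'$ to absorb the $-|I|$ into the principal-character term. The only cosmetic difference is that you spell out the coprimality caveat in the orthogonality relation and the isolation of $\chi_0$ a bit more explicitly, whereas the paper derives the indicator via $\overline{\chi}(a)\chi(n)=\chi(\overline{a}n)$; the content is the same.
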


\begin{proof} We note that for each integer $n$,
\[\sum_{\chi \mod q} \chi(n) = \left\{
                                 \begin{array}{ll}
                                   \phi(q), & \hbox{ $n \equiv 1 \mod q$;} \\
                                   0, & \hbox{otherwise.}
                                 \end{array}
                               \right.\]
For any integer $a$ coprime to $q$, we let $\overline{a}$ denote an integer such that $\overline{a} a \equiv 1 \mod q$. Then, for any $\chi \mod q$, $\overline{\chi}(a) \chi(n) = \chi(\overline{a}) \chi(n) = \chi(\overline{a} n)$. Since $\overline{a} n \equiv 1 \mod q$ if and only if $n \equiv a \mod q$, we have
\[\frac{1}{\phi(q)} \sum_{\chi \mod q} \overline{\chi}(a) \chi(n) = \left\{
                                                                      \begin{array}{ll}
                                                                        1, & \hbox{$ n \equiv a \mod q$;} \\
                                                                        0, & \hbox{otherwise.}
                                                                      \end{array}
                                                                    \right.\]

We then observe
\begin{eqnarray*}
% \nonumber to remove numbering (before each equation)
  \theta(I; q, a) &=& \sum_{\substack{p \in I\\ p \equiv a \mod q}} \log (p)  \\
   &=& \frac{1}{\phi(q)} \sum_{p \in I} \log (p) \sum_{ \chi \mod q} \overline{\chi}(a) \chi(p) \\
   &=& \frac{1}{\phi(q)} \sum_{\chi \mod q} \overline{\chi}(a) \theta(I, \chi).
\end{eqnarray*}
By definition of $\theta'(I,\chi)$, it then follows that
\[\theta(I; q,a) - \frac{|I|}{\phi(q)} = \frac{1}{\phi(q)} \sum_{\chi \mod q} \overline{\chi}(a) \theta'(I,\chi).\]
\end{proof}

The quantity (\ref{eq:VarBHD4}) can then be expressed as
\begin{equation}\label{eq:VarBHD5}
\sum_{q \leq Q} \frac{1}{\phi(q)^2}\sum_{c=1}^{2^{k-\ell}} \sum_{\substack{a \leq q \\ (a,q) =1}} \left| \sum_{\chi \mod q} \overline{\chi}(a) \theta'(I_{c,\ell},\chi)\right|^2.
\end{equation}

For fixed $q$ and $c$, the inner quantity can be expanded as:
\[ = \sum_{\substack{a \leq q \\ (a,q) =1}} \sum_{\chi_1 \mod q} \; \sum_{\chi_2 \mod q} \overline{\chi_1}(a) \chi_2(a) \theta'(I_{c,\ell},\chi_1) \overline{\theta'(I_{c,\ell},\chi_2)}\]
Reordering the sums, this is
\[\sum_{\chi_1 \mod q} \; \sum_{\chi_2 \mod q} \theta'(I_{c,\ell},\chi_1) \overline{\theta'(I_{c,\ell},\chi_2)} \sum_{\substack{a \leq q \\ (a,q) =1}} \overline{\chi_1}(a) \chi_2(a).\]
The innermost sum is now the inner product of the characters $\chi_1, \chi_2$. Since the distinct characters modulo $q$ are orthogonal under this inner product, this innermost sum is 0 unless $\chi_1 = \chi_2$. Therefore, (\ref{eq:VarBHD5}) is equal to
\begin{equation}\label{eq:VarBHD6}
\sum_{q \leq Q} \frac{1}{\phi(q)} \sum_{c =1}^{2^{k-\ell}} \sum_{\chi \mod q} \left| \theta'(I_{c,\ell},\chi)\right|^2.
\end{equation}

In order to use the large sieve inequality as stated in Theorem \ref{thm:standardLargeSieve}, we need to adjust our character sum to be over the primitive characters modulo $q$ instead of all characters modulo $q$. For this, we first note that every character $\chi$ modulo $q$ is induced by some primitive character $\chi_1$ modulo $q_1$ where $q_1 \leq q$. We then have:

\begin{lemma}\label{lem:primitive}
For any $I$ and any character $\chi$ modulo $q$ induced by $\chi_1$ modulo $q_1$,
\[ \left| \theta'(I,\chi_1) - \theta'(I,\chi)\right| \leq \log(q).\]
\end{lemma}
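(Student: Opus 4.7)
The plan is to compare $\chi$ and $\chi_1$ pointwise on primes in $I$ and exploit the fact that they agree except on primes dividing $q$ but not $q_1$. Recall that $\chi$ being induced by $\chi_1$ means $\chi(n) = \chi_1(n)$ whenever $\gcd(n,q) = 1$, and $\chi(n) = 0$ whenever $\gcd(n,q) > 1$. For a prime $p$, the only way $\chi(p) \neq \chi_1(p)$ is that $p \mid q$ but $p \nmid q_1$, in which case $\chi(p) = 0$ while $|\chi_1(p)| = 1$.

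I would split into two cases according to whether $\chi$ is principal or not. If $\chi$ is non-principal mod $q$, then $\chi_1$ is also non-principal (otherwise the induced character would itself be principal), so $\theta'(I,\chi) = \theta(I,\chi)$ and $\theta'(I,\chi_1) = \theta(I,\chi_1)$; writing
\[
\theta'(I,\chi_1) - \theta'(I,\chi) = \sum_{p \in I} (\chi_1(p) - \chi(p)) \log(p),
\]
the observation above reduces this to a sum supported on primes $p \in I$ with $p \mid q$, $p \nmid q_1$, each contributing at most $\log(p)$ in absolute value. Hence the difference is bounded by $\sum_{p \mid q} \log(p) \leq \log(q)$, the last inequality being the standard fact that the sum of $\log$ over distinct prime divisors is at most $\log(q)$.

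If $\chi = \chi_0$ is the principal character mod $q$, then its conductor is $1$, so $q_1 = 1$ and $\chi_1$ is the trivial character, which is the principal character mod $1$. In this case the $-|I|$ terms appearing in the definition of $\theta'$ for both $\chi$ and $\chi_1$ cancel, leaving
\[
\theta'(I,\chi_1) - \theta'(I,\chi_0) = \theta(I) - \theta(I,\chi_0) = \sum_{\substack{p \in I \\ p \mid q}} \log(p),
\]
which again is at most $\log(q)$ by the same elementary bound.

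The argument involves no real obstacle; the only subtlety is remembering that principal characters correspond to the trivial character mod $1$ (so that both $\chi$ and $\chi_1$ are simultaneously principal or simultaneously non-principal) and hence the $-|I|$ subtractions in the definition of $\theta'$ line up properly and cancel out. Once this is noted, the bound is immediate from the trivial estimate $\sum_{p \mid q}\log(p) \leq \log(q)$.
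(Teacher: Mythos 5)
Your proof is correct and takes essentially the same approach as the paper: reduce the difference $\theta'(I,\chi_1) - \theta'(I,\chi)$ to a sum of $\log(p)$ over primes $p \in I$ dividing $q$, then bound by $\log(q)$ via the prime factorization. The one thing you do more carefully than the paper is spell out why the $-|I|$ corrections in the definition of $\theta'$ either both appear or neither appears (i.e., $\chi$ is principal mod $q$ if and only if $\chi_1$ is the trivial character mod $1$); the paper states the key identity $\theta'(I,\chi_1) - \theta'(I,\chi) = \sum_{p \in I,\, p\mid q} \chi_1(p)\log(p)$ without explicitly justifying this alignment, which your case analysis supplies.
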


\begin{proof} For all integers $n$ coprime to $q$, $\chi(n) = \chi_1(n)$. In fact,
\[ \theta'(I,\chi_1) - \theta'(I,\chi) = \sum_{\substack{p \in I \\ p|q}} \chi_1(p) \log(p).\]
Therefore,
\[\left| \theta'(I,\chi_1) - \theta'(I,\chi)\right| \leq \sum_{\substack{p \in I \\ p|q}} \log(p) \leq \log(q).\]
To see the final inequality, consider the prime factorization of $q = p_1^{\alpha_1} \cdots p_r^{\alpha_r}$. Then $\log (q) = \alpha_1 \log(p_1) + \cdots + \alpha_r \log(p_r)$.
\end{proof}

As a consequence of Lemma \ref{lem:primitive}, we have $|\theta'(I_{c,\ell},\chi)|^2 \ll |\theta'(I_{c,\ell},\chi_1)|^2 + \log^2 q$ for all dyadic intervals $I_{c,\ell}$ and all non-primitive characters $\chi$. Thus, the quantity in (\ref{eq:VarBHD6}) is
\begin{eqnarray}\label{eq:VarBHD7}\nonumber
 & \ll & \sum_{q \leq Q} \frac{1}{\phi(q)} \sum_{c=1}^{2^{k-\ell}} \sum_{\chi \mod q} \left( \log^2(q) + |\theta'(I_{c,\ell},\chi_1)|^2\right) \\
 &= &\sum_{q \leq Q} \frac{1}{\phi(q)} \sum_{c=1}^{2^{k-\ell}} \sum_{\chi \mod q} \log^2(q) + \sum_{q \leq Q} \frac{1}{\phi(q)}\sum_{c=1}^{2^{k-\ell}} \sum_{\chi \mod q} |\theta'(I_{c,\ell},\chi_1)|^2.
\end{eqnarray}
As above, $\chi_1$ here denotes the primitive character that induces $\chi$.

We bound the contribution of this first sum to the quantity in (\ref{eq:VarBHD3}) (noting that there are $\phi(q)$ characters modulo $q$):
\[\sum_{0 \leq \ell \leq k} \sqrt{\sum_{q \leq Q} 2^{k-\ell} \log^2(q)} = \left( \sum_{0 \leq \ell \leq k} (2^{\frac{1}{2}})^{k-\ell} \right) \sqrt{\sum_{q \leq Q} \log^2(q)} \ll 2^{\frac{k}{2}} \sqrt{Q \log^2(Q)}.\]
Since (\ref{eq:VarBHD3}) is an upper bound on the square root of (\ref{eq:VarBHD1}), the contribution to (\ref{eq:VarBHD1}) is therefore
$\ll 2^k Q \log^2(Q) \ll x Q\log^2(x)$,
which is acceptable.

It thus suffices to consider
\begin{equation}\label{eq:VarBHD8}
\sum_{q \leq Q} \frac{1}{\phi(q)} \sum_{c=1}^{2^{k-\ell}} \sum_{\chi \mod q} |\theta'(I_{c,\ell}, \chi_1)|^2
\end{equation}
for each fixed $\ell$.
Each primitive character $\chi_1$ modulo $q_1$ induces characters $\chi$ modulo $q$ for every $q$ that is a multiple of $q_1$. We can use this to rewrite the above quantity in terms of a sum over only primitive characters:
\[= \sum_{q \leq Q} \sum_{c=1}^{2^{k-\ell}} \; \sideset{}{^*} \sum_{\chi \mod q} |\theta'(I_{c,\ell},\chi)|^2 \left( \sum_{j \leq \frac{Q}{q}} \frac{1}{\phi(jq)}\right).\]
We note that
\[\sum_{j \leq \frac{Q}{q}} \frac{1}{\phi(jq)} \ll \phi(q)^{-1} \log(2Q/q)\]
(see \cite{D}, pp. 163), so this is
\begin{equation}\label{eq:VarBHD9}
\ll \sum_{q \leq Q} \frac{1}{\phi(q)} \log\left(\frac{2Q}{q}\right) \sum_{c=1}^{2^{k-\ell}} \; \sideset{}{^*} \sum_{\chi \mod q} |\theta'(I_{c,\ell},\chi)|^2.
\end{equation}

We will split this sum over $q \leq Q$ into ranges and bound each piece separately. For a fixed $1\leq U \leq Q$, we consider
\begin{equation}\label{eq:VarBHD10}
\sum_{c=1}^{2^{k-\ell}} \sum_{ U < q \leq 2U} \frac{1}{\phi(q)} \log \left(\frac{2Q}{q}\right) \; \sideset{}{^*}\sum_{ \chi \mod q} |\theta(I_{c,\ell}, \chi)|^2.
\end{equation}
Note that we have switched notation from $\theta'$ to $\theta$ here without changing the quantity, since $\theta'$ and $\theta$ only differ on the trivial character, and this is only included in the primitive characters modulo $q$ when $q =1$. Since $U\geq 1$ and our sum here is over $q > U$, $\theta$ and $\theta'$ behave identically here.

We observe that for each fixed $c$, the contribution to (\ref{eq:VarBHD10}) is
\[ \ll U^{-1} \log\left(\frac{2Q}{U}\right) \sum_{ q \leq 2U} \frac{q}{\phi(q)} \; \sideset{}{^*} \sum_{\chi \mod q} |\theta(I_{c,\ell}, \chi)|^2. \]
Letting $a_p := \log(p)$ for primes $p$ and $a_n := 0$ for all non-primes $n$, we apply Theorem \ref{thm:standardLargeSieve} to see that this is
\[
\ll U^{-1} \log\left(\frac{2Q}{U}\right)\left(|I_{c,\ell}| + U^2\right) \sum_{p \in I_{c,\ell}} \log^2(p).
\]

We define $Q_1 := \log^{A+1}(x)$. We consider the values $U = Q2^{-j}$ as $j$ ranges from 0 to $J := \lceil \log(\frac{Q}{Q_1}) \rceil$. We then have:
\[\sum_{Q_1 < q \leq Q} \frac{1}{\phi(q)} \log\left(\frac{2Q}{q}\right) \; \sideset{}{^*} \sum_{\chi \mod q} |\theta(I_{c,\ell}, \chi)|^2 \ll Q^{-1} \sum_{ j=0}^{J} 2^j (j+1) \left(|I_{c,\ell}| + Q^2 2^{-2j}\right) \sum_{p \in I_{c,\ell}} \log^2(p).  \]
We expand the latter quantity as
\[
 = |I_{c,\ell}| Q^{-1} \left(\sum_{j=0}^{J} (j+1)2^j\right) \left(\sum_{p \in I_{c,\ell}} \log^2(p)\right) + Q \left( \sum_{j=0}^{J} (j+1) 2^{-j}\right) \left(\sum_{p \in I_{c,\ell}} \log^2(p)\right).
\]
Inserting this into the sum over the $c$ values, we then have
\[
\sum_{c=1}^{2^{k-\ell}} \sum_{Q_1 < q \leq Q} \frac{1}{\phi(q)}\log\left(\frac{2Q}{q}\right) \sideset{}{^*}\sum_{\chi \mod q} |\theta(I_{c,\ell},\chi)|^2 \] \[\ll \sum_{c =1}^{2^{k-\ell}} |I_{c,\ell}| Q^{-1} \left(\sum_{j=0}^{J} (j+1)2^j\right) \left(\sum_{p \in I_{c,\ell}} \log^2(p)\right) + \sum_{c=1}^{2^{k-\ell}} Q \left( \sum_{j=0}^{J} j 2^{-j}\right) \left(\sum_{p \in I_{c,\ell}} \log^2(p)\right).
\]

We observe that
\[\sum_{c =1}^{2^{k-\ell}} \sum_{p \in I_{c,\ell}} \log^2(p)  = \sum_{ p \leq 2^k} \log^2(p),\] since the union of the intervals $I_{c,\ell}$ as $c$ ranges from 1 to $2^{k-\ell}$ is equal to the interval $[2^k]$.
We then have that $\sum_{p \leq 2^k} \log^2(p) \ll k 2^k$.
We also note that
\[\sum_{j=0}^J (j+1)2^j = J2^{J+1} + 1 \ll J 2^J \text{  and  } \sum_{j=0}^J (j+1) 2^{-j} \ll 1.\]

Therefore, we have shown
\begin{equation}\label{eq:VarBHD11}
\sum_{c=1}^{2^{k-\ell}} \sum_{Q_1 < q \leq Q} \frac{1}{\phi(q)}\log\left(\frac{2Q}{q}\right) \sideset{}{^*}\sum_{\chi \mod q} |\theta(I_{c,\ell},\chi)|^2 \ll Q^{-1} ( 2^{\ell})( J 2^{J})( k 2^k) + Q ( k 2^k).
\end{equation}
Recalling that $2^k \ll x$, $k \ll \log(x)$, $J = \lceil \log(\frac{Q}{Q_1}) \rceil$, and $Q \leq x$, we see that the contribution to (\ref{eq:VarBHD9}) from $q$'s between $Q_1$ and $Q$ is
\[ \ll Q_1^{-1} (2^\ell)(x \log^2(x))+ Q (x \log(x)).\]

%Now summing over $\ell$, we see that the contribution of the values of $q$ between $Q_1$ and $Q$ to the quantity \ref{eq:VarBHD3} is
%\[ \ll \sum_{\ell =0}^k \sqrt{Q_1^{-1} (2^\ell)(x \log^2(x))+ Q (x \log(x))} \leq \sum_{\ell=0}^k \sqrt{Q_1^{-1} (2^\ell)(x\log^2(x))} + \sum_{\ell=0}^k \sqrt{Q(x \log(x))}.\]
%Since $k \ll \log(x)$, this is
%\[ \ll x\sqrt{Q_1^{-1} \log^2(x)} + \log(x) \sqrt{Q x \log(x)}.\]

We now consider values of $q \leq Q_1$. For every primitive character $\chi$ modulo $q$ where $q >1$, $\chi$ is non-principal. Note that for the principal character $\chi_0$ modulo 1, $\theta'(I_{c,\ell},\chi_0) = \theta(I_{c,\ell}, \chi_0) - |I_{c,\ell}| = 0$. Thus, the contribution to (\ref{eq:VarBHD9}) from values $q \leq Q_1$ can be written as:
\begin{equation}\label{eq:VarBHD12}
\sum_{ 1 < q \leq Q_1} \frac{1}{\phi(q)} \log\left(\frac{2Q}{q}\right) \sideset{}{^*}\sum_{\chi \mod q} \sum_{c=1}^{2^{k-\ell}} |\theta'(I_{c,\ell},\chi)|^2.
\end{equation}
This innermost sum over $c$ is a sum over a partition of $[2^k]$, so we can apply Lemma \ref{lem:VarSW} (with $A+1$ as the constant) to conclude that
\[\sum_{c=1}^{2^{k-\ell}} |\theta'(I_{c,\ell},\chi)|^2 \ll_A x^2 e^{-\tilde{c}_A \log^{\frac{1}{2}}(x)}\]
for some positive constant $\tilde{c}_A$ depending only on $A$. The quantity in (\ref{eq:VarBHD12}) is then \\ $\ll_A Q_1 \log(Q)\cdot x^2 e^{-\tilde{c}_A \log^{\frac{1}{2}}(x)}$.

Putting this all together, we have that the quantity in (\ref{eq:VarBHD9}) is:
\[ \ll_A  Q_1 \log(Q)\cdot x^2 e^{-\tilde{c}_A \log^{\frac{1}{2}}(x)} +  Q_1^{-1} (2^\ell)(x \log^2(x))+ Q (x \log(x)).\]
Thus, the contribution to (\ref{eq:VarBHD3}) is bounded by:
\[\ll_A \sum_{\ell=0}^k \sqrt{Q_1 \log(Q)\cdot x^2 e^{-\tilde{c}_A \log^{\frac{1}{2}}(x)} + Q_1^{-1} (2^\ell)(x \log^2(x))+ Q (x \log(x))} \]
\[\ll_A \log(x) \sqrt{Q_1 \log(Q) \cdot x^2 e^{-\tilde{c}_A \log^{\frac{1}{2}}(x)}} + \sqrt{Q_1^{-1} x^2 \log^2(x)} + \log(x) \sqrt{Q(x\log(x))}.\]
Hence the contribution to (\ref{eq:VarBHD1}) is bounded by the square of this:
\[ \ll_A Q_1 \log(Q) \log^2(x) x^2 e^{-\tilde{c}_A \log^{\frac{1}{2}}(x)} + Q_1^{-1} x^2 \log^2(x) + Q x \log^3(x).\]

Recalling that $Q_1 = \log^{A+1}(x)$ and $x\log^{-A}(x) \leq Q \leq x$, we see that this is
\[ \ll_A \log^{A+4}(x)x^2 e^{-\tilde{c}_A \log^{\frac{1}{2}}(x)} + Qx \log(x) + Qx \log^3(x).\]
Since $e^{-\tilde{c}_A \log^{\frac{1}{2}}(x)} \ll_A \log^{-2A-1}(x)$, this first term is $ \ll_A Qx\log^3(x)$, as required. This completes the proof of Theorem \ref{thm:VarBHD}.

\subsection{An Averaged Variant of Erd\H{o}s' Conjecture}\label{sec:erdos}
We now apply our variational form of the Barban-Davenport-Halberstam Theorem to prove Corollary \ref{cor:erdos}.

\begin{cor8}
Let $A > 0$. For all positive real numbers $x$ and $Q$ satisfying $x (\log(x))^{-A} \leq Q \leq x$,
\[\sum_{q \leq Q} \sum_{\substack{a \leq q \\ (a,q)=1}} \sum_{p_{i+1}^{a,q} \leq x} \left( \frac{p_{i+1}^{a,q}-p_i^{a,q}}{\phi(q)}\right)^2 \ll Qx \log^3(x).\]
\end{cor8}

\begin{proof} For each fixed $a,q$, we consider a partition in $\mathcal{P}_x$ containing all the intervals of the form
\[I_i := \left(p_i^{a,q}, p_{i+1}^{a,q}\right).\]
By definition of $\theta$ and $I_i$, the quantity $\theta(I_i; q,a)$ equals 0 for all $i$. Hence,
\[\left(\theta(I_i; q,a) - \frac{|I_i|}{\phi(q)}\right)^2 =  \left(\frac{p_{i+1}^{a,q} - p_i^{a,q}-1}{\phi(q)}\right)^2.\]
We note that $(p_{i+1}^{a,q}-p_i^{a,q}-1)^2 \gg (p_{i+1}^{a,q} - p_i^{a,q})^2$ (except for the case where $p_{i+1}^{a,q} = 3$ and $p_i^{a,q} = 2$, but this only occurs for $q = 1$ and so can be ignored). Thus, Theorem \ref{thm:VarBHD} implies
\[\sum_{q \leq Q} \sum_{\substack{a \leq q \\ (a,q)=1}} \sum_{p_{i+1}^{a,q} \leq x} \left( \frac{p_{i+1}^{a,q}-p_i^{a,q}}{\phi(q)}\right)^2 \ll Qx \log^3(x).\]
\end{proof}

\section{Another Variational Form of the Barban-Davenport-Halberstam Theorem}\label{sec:VarBHD2}
We now prove:

\begin{thm5} Let $A > 0$. For all positive real numbers $x$ and $Q$ satisfying $x (\log(x))^{-A} \leq Q \leq x$,
\[\sum_{q \leq Q} \max_{ \pi \in \mathcal{P}_x} \sum_{\substack{a\leq q \\ (a,q)=1}} \sum_{I \in \pi} \left(\theta(I;a,q)-\frac{|I|}{\phi(q)}\right)^2 \ll_A xQ \log^2(x).\]
\end{thm5}

%Comparing this to Theorem \ref{thm:VarBHD}, we observe that the partition must now depend only on $q$ and not on $a$. This allows us to obtain an %improvement in the exponent of the logarithm.

We will need some additional notation. We let $e(x) := e^{2\pi i x}$. For any real numbers $\{a_n\}_{n=1}^N$, we define a function $S:\mathbb{T} \rightarrow \mathbb{C}$ by
\[S(\alpha) := \sum_{n=1}^N a_n e(n\alpha).\]
For $\delta >0$, we say that points $\alpha_1, \ldots, \alpha_R \in \mathbb{T}$ are \emph{$\delta$-separated} if $||\alpha_i - \alpha_j|| \geq \delta$ for all $i\neq j$, where the $||\cdot ||$ denotes the norm modulo 1.

We let $\mathcal{P}_N$ denote the set of all partitions of $[N]$ into a union of disjoint intervals.
We then define
\[||S(\alpha)||_{V^r} := \max_{ \pi \in \mathcal{P}_N}\left( \sum_{I \in \pi} \left| \sum_{n\in I} a_n e(n\alpha)\right|^r\right)^{\frac{1}{r}}.\]

We note the variational Carleson Theorem \cite{OSTTW}:
\begin{theorem}\label{thm:VarCar}
For any real numbers $\{a_n\}_{n=1}^N$ and any $r>2$,
\[\int_{\mathbb{T}} ||S(\alpha)||^2_{V^r} d\alpha \ll_r \sum_{n=1}^N |a_n|^2.\]
\end{theorem}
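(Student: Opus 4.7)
The plan is to prove the variational Carleson theorem by extending the Lacey-Thiele time-frequency approach to the Carleson-Hunt theorem, replacing the classical single-tree maximal estimate with a martingale-based variational one. The first step is the observation that, writing each interval sum as a telescoping difference $\sum_{n \in I} a_n e(n\alpha) = S_{M_+}(\alpha) - S_{M_-}(\alpha)$ of partial sums $S_M(\alpha) := \sum_{n \leq M} a_n e(n\alpha)$, the quantity $\|S(\alpha)\|_{V^r}$ is comparable to the $r$-variation of the finite sequence $(S_M(\alpha))_{M=0}^N$. Hence it suffices to bound $\int_{\mathbb{T}} \|(S_M(\alpha))_M\|_{V^r}^2 \, d\alpha$ by $\sum_{n=1}^N |a_n|^2$, which after a standard transference reduces to an $L^2(\mathbb{R})$-boundedness statement for the continuous variational Carleson operator $f \mapsto \|(\xi \mapsto S_\xi f)\|_{V^r}$, where $S_\xi f(x) := \int_{-\infty}^\xi \hat f(\eta)\, e(x\eta)\, d\eta$.

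Next I would linearize the $V^r$-norm via the Pisier-Xu duality between $V^r$ and its predual, converting the problem into a uniform bilinear estimate in which the dual variable is a sequence of compactly supported $V^{r'}$-atoms. Following the Fefferman / Lacey-Thiele framework, I decompose this bilinear form into wave-packet contributions indexed by dyadic tiles in phase space, and organize the tiles into trees with a common top frequency by the standard Carleson tree-selection algorithm. The $L^2$-bound then reduces to two ingredients: a single-tree $V^r$-estimate, and a summation over the forest via the classical size and density lemmas. The summation step is essentially identical to the Carleson-Hunt argument once a suitable single-tree bound is in hand.

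The heart of the proof is the single-tree $V^r$-estimate. On a tree $\mathbf{T}$ with top frequency $\xi_0$, modulation by $e(-\xi_0 x)$ brings the partial-sum family into a form that, after controlling the smooth errors from the wave-packet localization, behaves like a dyadic martingale adapted to the filtration generated by the time intervals of $\mathbf{T}$. L\'epingle's inequality then supplies, for any $r > 2$, a uniform $V^r$-bound in $L^2$ for this martingale with an $r$-dependent constant, yielding the single-tree estimate with essentially the same $L^2$ constant as in the maximal Carleson-Hunt setting.

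The main obstacle is this single-tree estimate, and within it the treatment of short variations (variations within a single dyadic scale), since L\'epingle handles only the long variations between scales. The short part is controlled by a Rademacher-Menshov-type square-function argument exploiting the geometric decay of the wave packets, but this argument only works for $r > 2$: at $r = 2$ both L\'epingle and the square-function step degenerate, which is precisely why the theorem requires $r > 2$. Once the single-tree $V^r$-estimate has been established uniformly in the tree, the forest summation proceeds as in Carleson-Hunt to deliver the claimed bound $\int_{\mathbb{T}} \|S(\alpha)\|_{V^r}^2\, d\alpha \ll_r \sum_{n=1}^N |a_n|^2$.
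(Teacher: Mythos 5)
The paper does not prove Theorem~\ref{thm:VarCar} at all; it quotes it as an external black-box result from Oberlin, Seeger, Tao, Thiele, and Wright \cite{OSTTW}, a substantial paper in its own right, and no proof is given or expected here. Your sketch is a reasonable high-level description of the architecture of that published argument: transference from $\mathbb{T}$ to $\mathbb{R}$, the Lacey--Thiele wave-packet and tree decomposition, a single-tree $V^r$-estimate obtained by bringing the modulated partial-sum family into approximate martingale form and invoking a L\'epingle-type inequality, a separate Rademacher--Menshov treatment of the short (within-scale) variations --- which is indeed where the $r>2$ restriction enters --- and a forest summation via size and density. A few details diverge from or soften what actually appears in \cite{OSTTW}: the usual linearization of the $V^r$ norm in this framework is by a measurable family of stopping times together with pointwise $\ell^r$--$\ell^{r'}$ duality, not Pisier--Xu duality per se; and the repeated claim that various steps proceed ``essentially as in Carleson--Hunt'' or ``with essentially the same constant'' glosses over the substantial technical work that separates the variational theorem from the maximal one. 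Taken as an impressionistic road map of someone else's proof your outline is serviceable, but it is not a proof, and for the purposes of the present paper the correct move --- which the authors make --- is simply to cite \cite{OSTTW}.
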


The case of $r=2$ is addressed in the following theorem, which follows immediately from Corollary 4 in \cite{LL}:
\begin{theorem}\label{thm:r2}
For any real numbers $\{a_n\}_{n=1}^N$,
\[\int_{\mathbb{T}} ||S(\alpha)||^2_{V^2} d\alpha \ll \log(N) \sum_{n=1}^N |a_n|^2.\]
\end{theorem}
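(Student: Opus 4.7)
The plan is to exploit the fact that the maximum over partitions sits \emph{outside} the sum over $a$ in Theorem \ref{thm:VarBHD2}, which lets orthogonality of Dirichlet characters act before the partition is fixed. Fix $q$. Lemma \ref{lem:identity} together with the orthogonality relation $\sum_{(a,q)=1}\overline{\chi_1}(a)\chi_2(a) = \phi(q)\cdot\mathbf{1}_{\chi_1=\chi_2}$ gives, for any fixed partition $\pi\in\mathcal{P}_x$,
\[\sum_{\substack{a\leq q\\(a,q)=1}}\sum_{I\in\pi}\left(\theta(I;q,a)-\frac{|I|}{\phi(q)}\right)^2 = \frac{1}{\phi(q)}\sum_{\chi\mod q}\sum_{I\in\pi}|\theta'(I,\chi)|^2.\]
Swapping the two sums on the right and using $\max_\pi\sum_\chi \leq \sum_\chi\max_\pi$ yields
\[\max_{\pi\in\mathcal{P}_x}\sum_{\substack{a\leq q\\(a,q)=1}}\sum_{I\in\pi}\left(\theta(I;q,a)-\frac{|I|}{\phi(q)}\right)^2 \leq \frac{1}{\phi(q)}\sum_{\chi\mod q}\|\theta'(\cdot,\chi)\|_{V^2}^2.\]
This is the crucial efficiency gain over the proof of Theorem \ref{thm:VarBHD}: we sidestep the dyadic-decomposition/triangle-inequality step that was responsible for one of the logarithmic losses there.

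Next I would pass from all characters modulo $q$ to primitive characters. Lemma \ref{lem:primitive} implies $\|\theta'(\cdot,\chi)\|_{V^2}^2 \ll \|\theta'(\cdot,\chi_1)\|_{V^2}^2 + x\log^2(q)$, where $\chi_1$ is the primitive character inducing $\chi$ and the extra factor of $x$ is the trivial bound on the number of intervals in an optimal partition. Summed over $\chi\mod q$ and over $q\leq Q$, this error term contributes $O(xQ\log^2(x))$, which is absorbed by the target bound. Regrouping characters by their primitive inducer and applying the standard estimate $\sum_{q\leq Q,\,q_1\mid q}\phi(q)^{-1} \ll \phi(q_1)^{-1}\log(2Q/q_1)$ reduces the main term to bounding
\[\sum_{q_1\leq Q}\frac{\log(2Q/q_1)}{\phi(q_1)}\sideset{}{^*}\sum_{\chi_1\mod q_1}\|\theta'(\cdot,\chi_1)\|_{V^2}^2,\]
which I would split at the threshold $Q_1:=\log^{A+1}(x)$.

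For the small range $q_1\leq Q_1$, I would apply the variational Siegel--Walfisz Lemma \ref{lem:VarSW} (invoked with parameter $A+1$) to each primitive non-principal character, handling the trivial character modulo $1$ directly via the prime number theorem. Since at most $O(Q_1^2)$ primitive characters are involved and each $V^2$-norm is $\ll_A x^2\exp(-c'_{A+1}\log^{1/2}(x))$, the total contribution here is super-polynomially small. For the main range $Q_1<q_1\leq Q$, I would split dyadically into bands $U<q_1\leq 2U$ and invoke a variational form of the large sieve of the shape
\[\sum_{q\leq R}\frac{q}{\phi(q)}\sideset{}{^*}\sum_{\chi\mod q}\|T(\chi,\cdot)\|_{V^2}^2 \ll \bigl(N + R^2\bigr)\log(N)\sum_{n=M+1}^{M+N}|a_n|^2,\]
applied with $a_p=\log p$ and $a_n=0$ otherwise (so that $\sum|a_n|^2 \ll x\log x$). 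A routine dyadic summation in $U$, analogous to the argument leading to (\ref{eq:VarBHD11}) but saving one logarithm since no dyadic decomposition of arbitrary subintervals is needed here, yields the contribution $Qx\log^2(x)$ from the $R^2$ term of the large sieve and an acceptable remainder $\ll x^2 Q_1^{-1}\log^3(x)$ from the $N$ term, which the hypothesis $Q\geq x\log^{-A}(x)$ absorbs into $Qx\log^2(x)$.

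The main obstacle is establishing the variational large sieve displayed above, which is precisely the task taken up in the later Section \ref{sec:varLargeSieve}. The standard Gallagher-style derivation of Theorem \ref{thm:standardLargeSieve} reduces the large sieve to a bound on $\sum_r |S(\alpha_r)|^2$ for $\delta$-separated frequencies $\{\alpha_r\}$ via a Fourier integral identity; I would replicate this deduction with $|S(\alpha_r)|^2$ replaced by $\|S(\alpha_r,\cdot)\|_{V^2}^2$, invoking Theorem \ref{thm:r2} in place of Plancherel to supply the extra $\log(N)$ factor. Once the variational large sieve is in hand, the remaining adaptations of the classical Barban--Davenport--Halberstam argument are routine.
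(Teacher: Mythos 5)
Your proposal does not address the statement in question. The statement you were asked to prove is Theorem~\ref{thm:r2}, an $L^2$ square-variation estimate for exponential sums:
\[\int_{\mathbb{T}} \|S(\alpha)\|^2_{V^2}\,d\alpha \ll \log(N)\sum_{n=1}^N |a_n|^2.\]
This is a statement in harmonic analysis about the square variation of partial sums of a Fourier series; the paper derives it as an immediate consequence of Corollary~4 of~\cite{LL} (which controls the $V^2$-norm of partial Fourier sums of a function with a $\log N$ loss relative to Plancherel, a loss that is shown there to be sharp). The paper offers no free-standing proof beyond this citation, and a genuine proof of Theorem~\ref{thm:r2} would need to engage with the techniques of~\cite{LL}: a dyadic decomposition of the partial-sum operator, martingale-type square-function estimates, or a Rademacher--Menshov style argument producing the $\log N$ factor.

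What you have written instead is a sketch of a proof of Theorem~\ref{thm:VarBHD2}, the second variational Barban--Davenport--Halberstam theorem. Your opening sentence even announces this, and the argument tracks Section~\ref{sec:VarBHD2} of the paper fairly closely (orthogonality before fixing the partition, passage to primitive characters, dyadic splitting in $q$, Siegel--Walfisz for small moduli, a variational large sieve for large moduli). Worse, your final paragraph explicitly invokes Theorem~\ref{thm:r2} as a black-box input to supply the $\log N$ factor in the variational large sieve, so the argument is circular as a purported proof of Theorem~\ref{thm:r2}: you would be assuming the very statement you are supposed to establish. To answer the actual question you must prove the $V^2$ integral bound itself, not use it.
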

\begin{flushleft}
We note that the $\log(N)$ factor is known to be sharp.
We will first prove the following lemma, which is a variational version of the analytic large sieve inequality.
\end{flushleft}

\begin{lemma}\label{lem:analyticLS}
For any $\delta >0$ and for any points $\alpha_1, \ldots, \alpha_R \in \mathbb{T}$ that are $\delta$-separated,
\[ \sum_{i=1}^R ||S(\alpha_i)||^2_{V^r} \ll_r (N+\delta^{-1} +1 )\sum_{n=1}^N |a_n|^2\]
for any $r>2$. Also,
\[\sum_{i=1}^R ||S(\alpha_i)||^2_{V^2} \ll (N+\delta^{-1} +1 )\log(N) \sum_{n=1}^N |a_n|^2.\]
\end{lemma}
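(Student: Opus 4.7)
The plan is to mimic Gallagher's classical proof of the analytic large sieve inequality, replacing pointwise absolute values with $V^r$-norms throughout. Write $F(\alpha) := (a_n e(n\alpha))_{n=1}^N \in \mathbb{C}^N$ so that $\|F(\alpha)\|_{V^r} = \|S(\alpha)\|_{V^r}$, and set $\psi(\alpha) := \|F(\alpha)\|_{V^r}$. The first step is to check that $\psi$ is absolutely continuous with $|\psi'(\alpha)| \leq \|S'(\alpha)\|_{V^r}$ almost everywhere; this follows from the reverse triangle inequality $|\psi(\alpha_1)-\psi(\alpha_2)| \leq \|F(\alpha_1)-F(\alpha_2)\|_{V^r}$ together with the componentwise identity $F(\alpha_1)-F(\alpha_2) = \int_{\alpha_2}^{\alpha_1} F'(\tau)\,d\tau$ and the sublinearity of $\|\cdot\|_{V^r}$.

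The next step is a Gallagher-type pointwise estimate: for any $\alpha \in \mathbb{T}$,
\[ \psi(\alpha)^2 \leq \frac{1}{\delta} \int_{\alpha - \delta/2}^{\alpha + \delta/2} \psi(\beta)^2\, d\beta + 2 \int_{\alpha - \delta/2}^{\alpha + \delta/2} \psi(\gamma)\, \|S'(\gamma)\|_{V^r}\, d\gamma. \]
This is the usual one-line argument: $\psi(\alpha)^2 - \psi(\beta)^2 = \int_\beta^\alpha 2\psi\,\psi'\,d\gamma \leq 2\int_{\alpha-\delta/2}^{\alpha+\delta/2} \psi\,\|S'\|_{V^r}\,d\gamma$ for any $\beta$ in the interval, then averaging in $\beta$. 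Summing over the $\delta$-separated points $\alpha_1, \ldots, \alpha_R$ and using the disjointness of the intervals $(\alpha_i-\delta/2,\alpha_i+\delta/2)$ in $\mathbb{T}$ yields
\[ \sum_{i=1}^R \|S(\alpha_i)\|_{V^r}^2 \leq \frac{1}{\delta} \int_{\mathbb{T}} \|S(\alpha)\|_{V^r}^2\, d\alpha + 2 \int_{\mathbb{T}} \|S(\alpha)\|_{V^r}\, \|S'(\alpha)\|_{V^r}\, d\alpha. \]

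For $r > 2$, Theorem \ref{thm:VarCar} bounds $\int \|S\|_{V^r}^2 \ll_r \sum_n |a_n|^2$, and applied to the sequence $(2\pi i n a_n)$ (whose $S$-function is $S'$) it yields $\int \|S'\|_{V^r}^2 \ll_r \sum_n (2\pi n)^2 |a_n|^2 \leq 4\pi^2 N^2 \sum_n |a_n|^2$. Cauchy--Schwarz then gives $\int \|S\|_{V^r}\,\|S'\|_{V^r} \ll_r N \sum_n |a_n|^2$, so the total is $\ll_r (N + \delta^{-1})\sum_n |a_n|^2$, as required. For $r=2$ the same argument with Theorem \ref{thm:r2} in place of Theorem \ref{thm:VarCar} inserts an extra $\log(N)$ at each application and yields the $\log(N)(N + \delta^{-1})$ bound.

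The main obstacle is the first step: since the $V^r$-norm is a maximum over finitely many partitions, $\psi(\alpha)$ is only piecewise smooth in $\alpha$, so one must confirm that it is nonetheless absolutely continuous with the claimed derivative bound before applying the fundamental theorem of calculus and the chain rule to $\psi^2$. Once this regularity is established the remainder is a direct translation of the scalar Gallagher--Montgomery argument into the vector-valued setting, with the two variational Carleson theorems slotting in precisely where the scalar Plancherel identity is used classically.
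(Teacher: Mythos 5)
Your proof is correct, but it takes a genuinely different route from the paper's. The paper's proof follows Montgomery--Vaaler: it takes the Beurling--Selberg extremal majorant $K = |k|^2$ with $K \geq 1$ on $[1,N]$ and $\widehat{k}$ supported in $(-\delta/2,\delta/2)$, inserts $k(n)^{-1}k(n)$ into each term, uses Minkowski's integral inequality and Cauchy--Schwarz to pass the $V^r$-norm inside the integral over $\xi$, and then exploits the disjointness of the $\delta$-intervals to reduce to a single integral over $\mathbb{T}$ of $\|T(\xi)\|_{V^r}^2$, where $T$ is the $S$-function for the modified coefficients $a_n k(n)^{-1}$; the variational Carleson estimate then closes the argument. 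You instead run Gallagher's Sobolev-type argument, which the paper does not use: establish that $\psi = \|S(\cdot)\|_{V^r}$ is Lipschitz with $|\psi'| \leq \|S'\|_{V^r}$ a.e.\ (via the reverse triangle inequality for the $V^r$-norm on $\mathbb{C}^N$ and Minkowski's integral inequality), derive the pointwise bound $\psi(\alpha)^2 \leq \delta^{-1}\int \psi^2 + 2\int \psi\,\|S'\|_{V^r}$, sum over $\delta$-separated points, and then apply the variational Carleson theorem twice --- once to $(a_n)$ and once to $(2\pi n a_n)$ --- together with Cauchy--Schwarz. I checked the regularity step you flag as the main obstacle, and it is sound: $\|\cdot\|_{V^r}$ is a genuine norm on $\mathbb{C}^N$ (each partial sum is recoverable from a single partition, so definiteness holds, and the triangle inequality follows from Minkowski for $\ell^r$), so the reverse triangle inequality and the vector-valued fundamental theorem of calculus give Lipschitz continuity of $\psi$, and the chain rule $(\psi^2)' = 2\psi\psi'$ then holds a.e.\ because $\psi$ is bounded and Lipschitz. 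Both proofs insert the variational Carleson/Lacey--Thiele-type $L^2$ bound exactly where the classical proof would invoke Plancherel. What each buys: the Selberg--Vaaler route gives the sharp factor $N - 1 + \delta^{-1}$, while Gallagher's route is more elementary (no extremal function needed) but loses a constant, roughly $4\pi N + \delta^{-1}$; since the lemma is stated with $\ll$, both are acceptable, and the $\log N$ bookkeeping in the $r=2$ case carries through identically in each.
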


\begin{proof}
This proof will be a variational adaptation of the proof of Theorem 5 in \cite{MVaa}.
By a theorem of Selberg \cite{V}, there exists an entire function $K(z)$ such that $K$ is real-valued on $\mathbb{R}$, $K(x) \geq 0$ for all real $x$, and $K(x) \geq 1$ for all $1 \leq x\leq N$. Moreover, $K(x)$ is integrable, and $\widehat{K}(0) = N-1 + \delta^{-1}$. By a theorem of Fej\'{e}r, there is another entire function $k(z)$ such that $K(x) = |k(x)|^2$ for all $x \in \mathbb{R}$, and $\hat{k}$ (the fourier transform of $k(x)$) has support in $\left( -\frac{\delta}{2}, \frac{\delta}{2}\right)$. We note that
\[k(x) = \int_{-\frac{\delta}{2}}^{\frac{\delta}{2}} \hat{k}(\xi) e(x\xi) d\xi.\]

We define a function $T:\mathbb{T} \rightarrow \mathbb{C}$ by
\[T(\alpha) := \sum_{n=1}^N a_n k(n)^{-1} e(n\alpha).\]
Similarly, we define
\[||T(\alpha)||_{V^r} := \max_{ \pi \in \mathcal{P}_N}\left( \sum_{I \in \pi} \left| \sum_{n\in I} a_n k(n)^{-1} e(n\alpha)\right|^r\right)^{\frac{1}{r}}.\]

For any $\alpha$ and any $r \geq 2$, we have
\begin{eqnarray*}
||S(\alpha)||_{V^r} & = & \max_{ \pi \in \mathcal{P}_N}\left( \sum_{I \in \pi} \left| \sum_{n\in I} a_n e(n\alpha)\right|^r\right)^{\frac{1}{r}} \\
 & = & \max_{\pi \in \mathcal{P}_N} \left( \sum_{I \in \pi} \left| \sum_{n \in I} a_n k(n)^{-1} e(n\alpha) \int_{-\frac{\delta}{2}}^{\frac{\delta}{2}} \hat{k}(\xi) e(n\xi) d\xi \right|^r\right)^{\frac{1}{r}}\\
 & = & \max_{\pi \in \mathcal{P}_N} \left( \sum_{I \in \pi} \left| \int_{-\frac{\delta}{2}}^{\frac{\delta}{2}} \hat{k}(\xi) \sum_{n\in I} a_n k(n)^{-1} e(n(\alpha + \xi))d\xi \right|^r\right)^{\frac{1}{r}}.
\end{eqnarray*}

By Minkowski's integral inequality (see \cite{HLP}, Theorem 201 for example), this last quantity is
\[ \leq \max_{\pi \in \mathcal{P}_N} \int_{-\frac{\delta}{2}}^{\frac{\delta}{2}} \left( \sum_{I \in \pi} \left| \hat{k}(\xi) \sum_{n\in I} a_n k(n)^{-1} e(n(\alpha + \xi))\right|^r\right)^{\frac{1}{r}} d \xi\]
\[ = \max_{\pi \in \mathcal{P}_N} \int_{-\frac{\delta}{2}}^{\frac{\delta}{2}} |\hat{k}(\xi)| \left( \sum_{I \in \pi} \left|\sum_{n\in I} a_n k(n)^{-1} e(n(\alpha + \xi))\right|^r\right)^{\frac{1}{r}} d \xi\]
\[\leq \int_{-\frac{\delta}{2}}^{\frac{\delta}{2}} |\hat{k}(\xi)| \max_{\pi \in \mathcal{P}_N} \left( \sum_{I \in \pi} \left|\sum_{n\in I} a_n k(n)^{-1} e(n(\alpha + \xi))\right|^r\right)^{\frac{1}{r}} d \xi.\]

Now applying the Cauchy-Schwarz inequality, we see this is
\[\leq \left( \int_{-\frac{\delta}{2}}^{\frac{\delta}{2}} |\hat{k}(\xi)|^2 d\xi\right)^{\frac{1}{2}} \left( \int_{-\frac{\delta}{2}}^{\frac{\delta}{2}} ||T(\xi+\alpha)||^2_{V^r} d\xi\right)^{\frac{1}{2}}.\]
By the properties of $k$ and $K$, we have
\[\left( \int_{-\frac{\delta}{2}}^{\frac{\delta}{2}} |\hat{k}(\xi)|^2 d\xi\right)^{\frac{1}{2}} = \left(\int_{-\infty}^\infty |k(x)|^2 dx\right)^{\frac{1}{2}} = \left( \int_{-\infty}^\infty K(x)dx \right)^{\frac{1}{2}} = \left(\widehat{K}(0)\right)^{\frac{1}{2}} = \left( N-1 + \delta^{-1}\right)^{\frac{1}{2}}.\]

Therefore, we have shown that
\[\sum_{i=1}^R ||S(\alpha_i)||^2_{V^r} \leq (N-1+\delta^{-1}) \sum_{i=1}^R \int_{-\frac{\delta}{2}}^{\frac{\delta}{2}} ||T(\xi+\alpha_i)||^2_{V^r} d\xi.\]
Since the $\alpha_i$'s are $\delta$-separated, the ranges $(-\frac{\delta}{2}+ \alpha_i, \frac{\delta}{2} + \alpha_i)$ are disjoint in $\mathbb{T}$, and hence
\begin{equation}\label{eq:int}
\sum_{i=1}^R ||S(\alpha_i)||^2_{V^r} \leq (N-1+\delta^{-1}) \int_{\mathbb{T}} ||T(\xi)||^2_{V^r} d\xi.
\end{equation}

For $r>2$, we may apply Theorem \ref{thm:VarCar} for the real numbers $\{a_n k(n)^{-1}\}_{n=1}^N$ to conclude that the righthand side of (\ref{eq:int}) is
\[ \ll_r (N-1+\delta^{-1})\sum_{n=1}^N |a_n k(n)^{-1}|^2.\]
Recalling that $\frac{1}{|k(n)|^2} = \frac{1}{K(n)}$ and $K(n) \geq 1$ for all $n$ from 1 to $N$, we obtain
\[\sum_{i=1}^R ||S(\alpha_i)||^2_{V^r} \ll_r (N-1+\delta^{-1}) \sum_{n=1}^N |a_n|^2\]
for all $r>2$, as required.

For $r=2$, we may apply Theorem \ref{thm:r2} for the real numbers $\{a_n k(n)^{-1}\}_{n=1}^N$ to conclude that the righthand side of (\ref{eq:int}) is
\[ \ll (N-1+\delta^{-1}) \log(N) \sum_{n=1}^N |a_n k(n)^{-1}|^2 \ll (N-1+\delta^{-1})\log(N) \sum_{n=1}^N |a_n|^2.\]
\end{proof}

We next prove the following lemma:
\begin{lemma}\label{lem:mid}
For any real numbers $\{a_n\}_{n=1}^N$,
\[\sum_{q \leq Q} \frac{q}{\phi(q)} \max_{\pi \in \mathcal{P}_N} \sideset{}{^*}\sum_{\chi \mod q} \sum_{I \in \pi} |T(I,\chi)|^2 \ll (N+Q^2)\log(N) \sum_{n=1}^N |a_n|^2.\]
\end{lemma}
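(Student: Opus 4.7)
The plan is to mirror the standard Gauss-sum-based proof of the character form of the large sieve, but to carry the partition $\pi$ through every step and invoke the variational analytic large sieve (Lemma \ref{lem:analyticLS}) at the end in place of the classical one. First, for each primitive Dirichlet character $\chi$ modulo $q$, the Gauss sum identity $\tau(\overline\chi)\chi(n) = \sum_{a=1}^{q} \overline\chi(a) e(an/q)$ (valid for all integers $n$) and $|\tau(\overline\chi)|^2 = q$ give
\[T(I,\chi) = \tau(\overline\chi)^{-1} \sum_{a=1}^{q} \overline\chi(a) S(a/q;I), \qquad S(\alpha;I) := \sum_{n \in I} a_n e(n\alpha).\]
Squaring, bounding the primitive character sum by the full character sum, and applying Parseval on the character group of $(\Z/q\Z)^*$ then yields, for each \emph{fixed} interval $I$,
\[\sideset{}{^*}\sum_{\chi \bmod q} |T(I,\chi)|^2 \;\leq\; \frac{1}{q} \sum_{\chi \bmod q} \Bigl| \sum_{a} \overline\chi(a) S(a/q;I) \Bigr|^2 \;=\; \frac{\phi(q)}{q} \sum_{\substack{a \leq q \\ (a,q)=1}} |S(a/q;I)|^2.\]

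Next, summing this pointwise-in-$I$ inequality over $I \in \pi$ and rearranging produces, for every partition $\pi$,
\[\frac{q}{\phi(q)} \sideset{}{^*}\sum_{\chi \bmod q} \sum_{I \in \pi} |T(I,\chi)|^2 \;\leq\; \sum_{(a,q)=1} \sum_{I \in \pi} |S(a/q;I)|^2.\]
Taking $\max_{\pi \in \mathcal{P}_N}$ on both sides and using the trivial interchange $\max_\pi \sum_a (\cdot) \leq \sum_a \max_\pi (\cdot)$ on the right (all summands are nonnegative), the right-hand side collapses to $\sum_{(a,q)=1} \|S(a/q)\|_{V^2}^2$ by the definition of the $V^2$-norm.

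Finally, summing over $q \leq Q$ and noting that the Farey fractions $\{a/q : q \leq Q,\ (a,q)=1\}$ are $Q^{-2}$-separated on $\mathbb{T}$ (since for distinct fractions $\|a/q - a'/q'\| \geq 1/(qq') \geq Q^{-2}$), Lemma \ref{lem:analyticLS} applied with $r=2$ and $\delta = Q^{-2}$ gives
\[\sum_{q \leq Q} \sum_{(a,q)=1} \|S(a/q)\|_{V^2}^2 \;\ll\; (N + Q^2 + 1)\log(N) \sum_{n=1}^{N} |a_n|^2,\]
which is the claimed bound. I do not anticipate any essential obstacle: once the variational analytic large sieve is in hand, the classical Gauss-sum duality chain transmits unchanged with $\pi$ riding along in every line, and the only subtle step is the max-swap, which is immediate from nonnegativity. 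If anything, the closest thing to a real step is confirming that Lemma \ref{lem:analyticLS} may be applied to the (possibly complex) sequence $a_n$ here; its proof uses only Minkowski and Cauchy-Schwarz, so the complex case is covered.
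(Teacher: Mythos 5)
Your proof is correct and follows essentially the same route as the paper: expand $T(I,\chi)$ via the Gauss-sum identity for primitive characters, use $|\tau(\overline\chi)|^2=q$, enlarge to the full character sum and apply orthogonality to pass to $\sum_{(a,q)=1}|S(a/q;I)|^2$, sum over $I\in\pi$, interchange the maximum with the sum over $a$ by nonnegativity, and finish with the $r=2$ case of Lemma \ref{lem:analyticLS} on the $Q^{-2}$-separated Farey fractions. The only cosmetic difference is that you carry out Parseval pointwise in $I$ and then sum over $I\in\pi$, whereas the paper sums over $I$ first and then uses orthogonality; these are the same computation. Your closing worry about complex coefficients is moot here, since both Lemma \ref{lem:mid} and Lemma \ref{lem:analyticLS} are stated for real $\{a_n\}$.
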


\begin{proof}
This proof is adapted from the proof of Theorem 4 in \cite{D}. For a character $\chi$ modulo $q$, we define the complex value $\tau(\chi)$ by
\[\tau(\chi) := \sum_{a\leq q} \chi(a) e\left(\frac{a}{q}\right).\]
We have
\[\chi(n) = \frac{1}{\tau(\overline{\chi})}\sum_{a \leq q} \overline{\chi}(a) e\left(\frac{an}{q}\right)\]
for all $n$ for all primitive $\chi$ (see \cite{D}, chapter 9).
Therefore, for any interval $I$ and any primitive $\chi$ modulo $q$,
\[ T(I,\chi) = \sum_{n\in I} a_n \chi(n) = \frac{1}{\tau(\overline{\chi})} \sum_{a \leq q} \overline{\chi}(a) \sum_{n\in I} a_n e\left(\frac{an}{q}\right).\]

We then note that when $\chi$ is primitive, $|\tau(\chi)| = q^{\frac{1}{2}}$ (see \cite{D}, p. 66). This yields (for any partition $\pi$):
\[
\sum_{I\in \pi} |T(I,\chi)|^2 = \frac{1}{q} \sum_{I \in \pi} \left| \sum_{a\leq q} \overline{\chi}(a) \sum_{n \in I} a_n e\left(\frac{an}{q}\right)\right|^2.
\]
Now summing over primitive characters, we have
\[
\sideset{}{^*}\sum_{\chi \mod q} \; \sum_{I \in \pi} |T(\chi,I)|^2 = \frac{1}{q} \sum_{I \in \pi} \; \sideset{}{^*}\sum_{\chi\mod q} \left|\sum_{a \leq q} \overline{\chi}(a) \sum_{n\in I} a_n e\left(\frac{an}{q}\right)\right|^2.
\]

This quantity can only increase if we sum over all characters modulo $q$, so this is
\begin{equation}\label{eq:allchars}
\leq \frac{1}{q} \sum_{I \in \pi} \sum_{a \leq q} \sum_{b \leq q} \left( \sum_{n \in I} a_n e\left(\frac{an}{q}\right)\right) \left(\sum_{n\in I} a_n \overline{e}\left(\frac{bn}{q}\right)\right) \sum_{\chi \mod q} \overline{\chi}(a)\chi(b).
\end{equation}
Here, $\overline{e}$ denotes conjugation. We note that this innermost sum over characters is equal to 0 unless $a = b$ and $a$ is coprime to $q$. In this case, it is equal to $\phi(q)$. Plugging this into (\ref{eq:allchars}), we have
\[ = \frac{\phi(q)}{q} \sum_{I \in \pi} \sum_{\substack{a \leq q \\ (a,q) =1}} \left| \sum_{n \in I} a_n e\left(\frac{an}{q}\right)\right|^2.\]

Recalling that
\[||S(\alpha)||^2_{V^2} := \max_{\pi \in \mathcal{P}_N} \sum_{I \in \pi} \left| \sum_{n \in I} a_n e(n\alpha)\right|^2,\]
we conclude that
\begin{eqnarray*}
\sum_{q \leq Q} \frac{q}{\phi(q)} \max_{\pi \in \mathcal{P}_N} \sideset{}{^*}\sum_{\chi \mod q} \sum_{I \in \pi} |T(I,\chi)|^2 & \ll &  \sum_{q \leq Q} \max_{\pi \in \mathcal{P}_N} \sum_{I \in \pi} \sum_{\substack{a \leq q \\ (a,q) =1}} \left| \sum_{n\in I} a_n e\left(\frac{an}{q}\right)\right|^2\\
 & \leq & \sum_{q \leq Q} \sum_{\substack{a \leq q \\ (a,q)=1}} ||S\left(a/q\right)||^2_{V^2}.
\end{eqnarray*}
Here we have used the fact that moving the maximum inside the sum over $a$'s coprime to $q$ can only make the quantity larger. The points $\frac{a}{q}$ as $q$ ranges from 1 to $Q$ and $a$ ranges over values coprime to $q$ are $\frac{1}{Q^2}$-separated as points in $\mathbb{T}$. Thus, applying Lemma \ref{lem:analyticLS}, this is $\ll (N + Q^2) \log(N)\sum_{n=1}^N |a_n|^2$.
\end{proof}

We are now equipped to prove Theorem \ref{thm:VarBHD2}.
We recall Lemma \ref{lem:identity}, which states that
\[\theta(I;q,a) - \frac{|I|}{\phi(q)} = \frac{1}{\phi(q)}\sum_{\chi \mod q} \overline{\chi}(a) \theta'(I,\chi).\]
We then have
\[
\sum_{q\leq Q} \max_{\pi \in \mathcal{P}_x} \sum_{\substack{a \leq q\\ (a,q)=1}} \; \sum_{I \in \pi} \left(\theta(I;q,a)-\frac{|I|}{\phi(q)}\right)^2  = \sum_{q \leq Q}\frac{1}{\phi(q)^2} \max_{\pi \in \mathcal{P}_x} \sum_{\substack{a\leq q \\ (a,q)=1}} \; \sum_{I \in \pi} \left| \sum_{\chi \mod q} \overline{\chi}(a) \theta'(I, \chi)\right|^2.
\]

By expanding the square and rearranging the sums inside the maximum, this is
\[ = \sum_{q \leq Q} \frac{1}{\phi(q)^2} \max_{\pi \in \mathcal{P}_x} \sum_{I \in \pi}\; \sum_{\chi_1 \mod q}\;  \sum_{\chi_2 \mod q} \theta'(I, \chi_1) \overline{\theta'(I,\chi_2)} \sum_{\substack{a\leq q\\ (a,q)=1}} \overline{\chi_1}(a) \chi_2(a).\]
This innermost sum over $a$'s coprime to $q$ is equal to $\phi(q)$ whenever $\chi_1 = \chi_2$, and is equal to 0 otherwise. Hence this quantity is
\begin{equation}\label{eq:BHD2-1}
 = \sum_{q \leq Q} \frac{1}{\phi(q)} \max_{\pi \in \mathcal{P}_x} \sum_{I \in \pi} \sum_{\chi \mod q} |\theta'(I,\chi)|^2.
\end{equation}

Each character $\chi \mod q$ is induced by some primitive character $\chi_1 \mod q_1$, where $q_1$ divides $q$. Applying the triangle inequality, we see that (\ref{eq:BHD2-1}) is
\begin{equation}\label{eq:BHD2-2}
\ll \sum_{q\leq Q} \frac{1}{\phi(q)} \max_{\pi \in \mathcal{P}_x} \sum_{I \in \pi} \sum_{\chi \mod q} |\theta'(I, \chi_1)|^2 + \sum_{q \leq Q} \frac{1}{\phi(q)} \max_{\pi \in \mathcal{P}_x} \sum_{I \in \pi} \sum_{\chi \mod q} |\theta'(I,\chi)-\theta'(I,\chi_1)|^2.
\end{equation}

We consider the second quantity in (\ref{eq:BHD2-2}). This is
\[ \leq \sum_{q \leq Q}\frac{1}{\phi(q)} \max_{\pi \in \mathcal{P}_x} \left( \sum_{I \in \pi} \sum_{\chi \mod q} |\theta'(I,\chi)-\theta'(I,\chi_1)|\right)^2.\]
Recalling the definitions of $\theta'(I,\chi)$ and $\theta'(I,\chi_1)$, we note that
\[ |\theta'(I,\chi) - \theta'(I,\chi_1)| \leq \sum_{\substack{p \in I\\ p|q}} \log p.\]
Thus, the second quantity in (\ref{eq:BHD2-2}) is
\[ \leq \sum_{q \leq Q} \frac{1}{\phi(q)} \max_{\pi \in \mathcal{P}_x}\left( \sum_{I \in \pi} \sum_{\chi \mod q} \sum_{\substack{p\in I \\ p|q}} \log(p)\right)^2 = \sum_{q \leq Q} \phi(q) \left( \sum_{\substack{p\leq x\\ p|q}} \log(p)\right)^2.\]
Since $\sum_{p|q} \log(p) \leq \log(q)$, this is
\[\leq \sum_{q \leq Q} \phi(q)\log^2(q) \ll Q^2 \log^2(Q) \leq xQ\log^2(x),\]
for $Q\leq x$.

It now suffices to bound the first quantity in (\ref{eq:BHD2-2}). Every primitive character $\chi_1 \mod q_1$ induces characters modulo $q$ for every $q$ that is a multiple of $q_1$. Also, the set of primitive characters $\chi_1$ inducing characters modulo $q$ can be divided into primitive characters modulo each divisor of $q$. By applying the triangle inequality and maximizing separately for each divisor of $q$, we see that for each $q$:
\[ \max_{\pi \in \mathcal{P}_x} \sum_{I  \in \pi} \sum_{\chi \mod q} |\theta'(I, \chi_1)|^2 \leq \sum_{q_1 |q} \max_{\pi \in \mathcal{P}_x} \sum_{I \in \pi} \; \sideset{}{^*}\sum_{\chi_1 \mod q_1} |\theta'(I,\chi_1)|^2.\]
Now summing over $q$ and accounting for the multiple occurrences of each $q_1$, we have
\[
\sum_{q \leq Q} \frac{1}{\phi(q)} \max_{\pi \in \mathcal{P}_x} \sum_{\pi \in I} \; \sum_{\chi \mod q} |\theta'(I,\chi_1)|^2 \leq \sum_{q \leq Q}\max_{\pi \in \mathcal{P}_x} \sum_{I \in \pi} \; \sideset{}{^*}\sum_{\chi \mod q} |\theta'(I,\chi)|^2 \left(\sum_{j\leq \frac{Q}{q}} \frac{1}{\phi(jq)} \right).
\]
As previously noted, this final sum over $j$ is $\ll \phi(q)^{-1} \log(2Q/q)$. Hence the above quantity is
\[ \ll \sum_{q \leq Q} \frac{\log(2Q/q)}{\phi(q)} \max_{\pi \in \mathcal{P}_x} \sum_{I \in \pi} \; \sideset{}{^*}\sum_{\chi \mod q} |\theta'(I,\chi)|^2.\]

As in the proof of Theorem \ref{thm:VarBHD}, we break this sum over $q$ into smaller ranges. For any $1\leq U \leq Q$, we have
\begin{equation}\label{eq:BHD2-3}
\sum_{ U < q \leq 2U} \frac{\log(2Q/q)}{\phi(q)} \max_{\pi \in \mathcal{P}_x} \sum_{I \in \pi} \; \sideset{}{^*}\sum_{\chi \mod q} |\theta'(I,\chi)|^2 \ll U^{-1} \log(2Q/U) \sum_{q\leq 2U} \frac{q}{\phi(q)} \max_{\pi \in \mathcal{P}_x} \sum_{I \in \pi}\; \sideset{}{^*} \sum_{\chi \mod q} |\theta(I,\chi)|^2
\end{equation}
(note that the change of notation from $\theta'$ to $\theta$ does not change any values).
We may now apply Lemma \ref{lem:mid} with $a_p = \log(p)$ for all primes $p$ and $a_n=0$ otherwise.
We conclude that the righthand side of (\ref{eq:BHD2-3}) is
\[ \ll U^{-1} \log(2Q/U)(x+U^2)\log(x) \left(\sum_{p\leq x} \log^2(p)\right) \ll U^{-1}\log(2Q/U) (x+U^2)x \log^2(x).\]

We define $Q_1 = \log^{A+1}(x)$. Setting $U = Q2^{-j}$ and summing over $j$ from 0 to $J := \lceil \log(Q/Q_1)\rceil$, we see that
\[
\sum_{Q_1 < q \leq Q} \frac{\log(2Q/q)}{\phi(q)} \max_{\pi \in \mathcal{P}_x} \sum_{I \in \pi} \; \sideset{}{^*}\sum_{\chi \mod q} |\theta'(I,\chi)|^2 \ll Q^{-1}x \log^2(x) \sum_{j=0}^J 2^j (j+1)(x+Q^2 2^{-2j}).
\]
Since $\sum_{j=0}^\infty (j+1)2^{-j}$ converges and $\sum_{j=0}^J (j+1)2^j \ll J 2^J$, this quantity is $\ll Q_1^{-1} x^2\log^2(x) \log(Q) + xQ\log^2(x)$. Recalling that $Q_1 = \log^{A+1}(x)$ and $x\log^{-A}(x) \leq Q \leq x$, we see this is $\ll xQ\log^2(x)$ as required.

It only remains to bound the contribution from the values of $q \leq Q_1$. We observe that
\[\sum_{q \leq Q_1} \frac{\log(2Q/q)}{\phi(q)} \max_{\pi \in \mathcal{P}_x} \sum_{I \in \pi} \; \sideset{}{^*}\sum_{\chi \mod q} |\theta'(I,\chi)|^2 \ll \log(Q) \sum_{q \leq Q_1} \frac{1}{\phi(q)} \; \sideset{}{^*}\sum_{\chi \mod q} \max_{\pi \in \mathcal{P}_x} \sum_{I \in \pi} |\theta'(I,\chi)|^2.\]
Now, every primitive character modulo $q$ for $q>1$ is non-principal. For the principal character $\chi_0$ modulo 1, the value of $\theta'(I,\chi_0)$ is 0 for any $I$, so we may rewrite our quantity as
\[\log(Q) \sum_{1 <q \leq Q_1} \frac{1}{\phi(q)} \; \sideset{}{^*}\sum_{\chi \mod q} \max_{\pi \in \mathcal{P}_x} \sum_{I \in \pi} |\theta(I,\chi)|^2.\]
Applying Lemma \ref{lem:VarSW} for the constant $A+1$, we see this is
\[ \ll_A \log(Q) \sum_{1 < q \leq Q_1} \frac{1}{\phi(q)} \; \sideset{}{^*}\sum_{\chi \mod q} x^2 e^{-\tilde{c}_A \log^{\frac{1}{2}}(x)},\]
for some positive constant $\tilde{c}_A$ depending only on $A$. Since there are $\phi(q)$ characters modulo $q$ (and hence at most $\phi(q)$ primitive ones), this is $\ll_A Q_1 \log(Q) x^2 e^{-\tilde{c}_A\log^{\frac{1}{2}}(x)}$. Recalling that $Q \leq x$ and $Q_1 = \log^{A+1}(x)$ and noting that $e^{-\tilde{c}_A \log^{\frac{1}{2}(x)}} \ll_A \log^{-2A}(x)$, we see this $\ll_A xQ\log^2(x)$. This completes the proof of Theorem \ref{thm:VarBHD2}.

\section{A Variational Form of the Large Sieve Inequality}\label{sec:varLargeSieve}

We now prove another variational form of Theorem \ref{thm:standardLargeSieve}. This will refine an estimate of Uchiyama stated below. The techniques are similar to those used above. We let $\mathcal{P}_{M,N}$ denote the set of partitions of $[M+1,M+N]$.

\begin{lemma}\label{lem:VarLargeSieve} For all positive integers $Q,M,N$ and complex numbers $\{a_n\}_{n=M+1}^{M+N}$,
\begin{equation}\label{eq:VarLargeSieve}
\sum_{q \leq Q} \frac{q}{\phi(q)} \sideset{}{^*}\sum_{\chi \mod q} \max_{\pi \in \mathcal{P}_{M,N}} \sum_{I \in \pi} |T(\chi,I)|^2 \ll \log^2(N)\left( N + Q^2\right) \sum_{n=M+1}^{N+M} |a_n|^2.
\end{equation}
\end{lemma}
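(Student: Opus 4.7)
The plan is to reduce the partition-dependent quantity $||T(\chi,\cdot)||_{V^2}^2 = \max_{\pi \in \mathcal{P}_{M,N}} \sum_{I \in \pi} |T(\chi,I)|^2$ to a bound over a $\chi$-independent collection of dyadic intervals, at which point the ordinary large sieve (Theorem \ref{thm:standardLargeSieve}) can be invoked directly. The main obstacle, in contrast to Lemma \ref{lem:mid}, is that the maximizing partition here depends on $\chi$, so we cannot pull the maximum outside the character sum and then run the Gauss-sum/orthogonality argument that worked there; a naive Cauchy--Schwarz on the $a$-sum in the Gauss identity for $T(\chi,I)$ would eliminate the $\chi$-dependence but lose an unacceptable factor of $\phi(q)$.

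To eliminate the dependence on $\pi$, I would invoke the binary decomposition Lemma \ref{lem:binarydecomp}: every subinterval of $[M+1,M+N]$ decomposes into dyadic pieces $I_{c,\ell}$, with at most two on each level $\ell=0,1,\ldots,k$ where $k \ll \log N$. Writing $T(\chi,I) = \sum_{\ell}\sum_{J \in D_\ell(I)} T(\chi,J)$ and applying the triangle inequality in $\ell^2(\pi)$, together with the facts that each level-$\ell$ dyadic interval belongs to at most one $D_\ell(I)$ (the $I$'s being disjoint) and $|D_\ell(I)| \le 2$, gives
\[
\sqrt{\sum_{I \in \pi} |T(\chi,I)|^2} \;\ll\; \sum_{\ell=0}^{k}\sqrt{\sum_{c} |T(\chi,I_{c,\ell})|^2}.
\]
Squaring and applying Cauchy--Schwarz in $\ell$ (the one lossy step, contributing one of the two $\log N$ factors in the final bound), and then taking the maximum over $\pi$, yields the partition-free estimate
\[
||T(\chi,\cdot)||_{V^2}^2 \;\ll\; \log N \sum_{\ell=0}^{k}\sum_c |T(\chi,I_{c,\ell})|^2.
\]

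The remainder is bookkeeping. Summing over primitive $\chi$ modulo $q$ and over $q \leq Q$, swapping the $(\ell,c)$ sums outside, and then applying Theorem \ref{thm:standardLargeSieve} to each sequence $\{a_n\}_{n \in I_{c,\ell}}$ separately gives
\[
\sum_{q \leq Q}\frac{q}{\phi(q)} \sideset{}{^*}\sum_{\chi \bmod q} |T(\chi,I_{c,\ell})|^2 \;\ll\; (|I_{c,\ell}|+Q^2)\sum_{n \in I_{c,\ell}} |a_n|^2.
\]
Since the level-$\ell$ dyadic intervals partition the ambient range, summing over $c$ produces $(2^\ell + Q^2)\sum_n |a_n|^2$, and summing this over $\ell \leq k$ produces $(N + Q^2 \log N)\sum_n |a_n|^2$. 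Multiplying by the outer $\log N$ delivers $(N\log N + Q^2 \log^2 N)\sum_n |a_n|^2 \ll (N+Q^2)\log^2 N \sum_n |a_n|^2$, as required. The extra $\log^2 N$ factor over the standard large sieve thus traces to two sources: the Cauchy--Schwarz step inside the dyadic decomposition, and the summation of the $Q^2$ term across the $O(\log N)$ dyadic levels. This mirrors the role played by Lemma \ref{lem:binarydecomp} in the proof of Theorem \ref{thm:VarBHD}.
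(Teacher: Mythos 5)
Your proof is correct and follows essentially the same route as the paper: reduce the partition-dependent variation norm to a fixed collection of dyadic intervals via Lemma \ref{lem:binarydecomp}, handle the levels with an $\ell^2$-type triangle inequality (costing one $\log N$), and then apply the standard large sieve per level (costing a second $\log N$ from accumulating $Q^2$ across $O(\log N)$ levels). The only difference is cosmetic in the order of operations: you apply Cauchy--Schwarz in $\ell$ pointwise in $\chi$ to obtain a clean bound $\|T(\chi,\cdot)\|_{V^2}^2 \ll \log N \sum_{\ell,c}|T(\chi,I_{c,\ell})|^2$ before summing over $q$ and $\chi$, whereas the paper carries the sum over $q$ and $\chi$ inside the square root throughout and squares at the very end. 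Your intermediate bound $(N\log N + Q^2\log^2 N)\sum|a_n|^2$ is actually marginally sharper than the paper's $(N+Q^2)\log^2 N$ before the final coarsening, but both yield the stated inequality.
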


\begin{proof} Without loss of generality, we assume that $N = 2^k$ for some $k$ (rounding $N$ up to the nearest power of 2 can be absorbed by the implicit constant). The interval from $M+1$ to $M+N$ can then be decomposed into dyadic intervals of the form $I_{c,\ell} := (M+(c-1)2^\ell, M+ c2^\ell]$ for each $\ell$ from 0 to $k$, where $c$ ranges from 1 to $2^{k-\ell}$. If we fix $\ell$ and let $c$ vary, we refer to the resulting set of intervals as the dyadic intervals on \emph{level $\ell$}.

For a fixed $q$ and primitive character $\chi$ mod $q$, we consider a maximizing partition $\pi^*$ in $\mathcal{P}_{M,N}$. By Lemma \ref{lem:binarydecomp}, every interval $I \in \pi^*$ can be decomposed as a union of $O(\log N)$ dyadic intervals of the form $I_{c,\ell}$ for varying $c$ and $\ell$ values, such that there are at most 2 intervals included on each level. We let $D(I)$ denote the set of dyadic intervals in the decomposition of $I$. For each $\ell$, $D_\ell(I)$ denotes the subset of intervals in $D(I)$ on level $\ell$.

%We denote the elements of this decomposition by $J_1, J_2, \ldots,$ i.e.
%\[ J = \bigcup_{m} J_{m}.\]
%We let $lev(J_m)$ denote the level of the dyadic interval $J_m$.

We can now express the square root of the left hand side of (\ref{eq:VarLargeSieve}) as:
\[\sqrt{ \sum_{q \leq Q} \frac{q}{\phi(q)} \sideset{}{^*}\sum_{\chi \mod q} \sum_{I \in \pi^*}\left| \sum_{J \in D(I)} T(\chi, J)\right|^2}.\]
We can alternatively express this as:
\[
\sqrt{\sum_{q \leq Q} \frac{q}{\phi(q)} \sideset{}{^*}\sum_{\chi \mod q} \sum_{I \in \pi^*}\left|\sum_{\ell=0}^k \; \sum_{J \in D_\ell(I)} T(\chi,J)\right|^2}.
\]
Applying the triangle inequality for the $\ell^2$ norm, this quantity is
\begin{equation}\label{eq:LS1}
\leq \sum_{\ell = 0}^k \sqrt{\sum_{q \leq Q}\frac{q}{\phi(q)} \sideset{}{^*}\sum_{\chi \mod q} \sum_{I \in \pi^*} \left| \sum_{J \in D_\ell(I)} T(\chi,J)\right|^2}.
\end{equation}

For a fixed $\ell$, we consider the quantity
\begin{equation}\label{eq:LS2}
\sum_{q \leq Q}\frac{q}{\phi(q)} \sideset{}{^*}\sum_{\chi \mod q} \sum_{I \in \pi^*} \left| \sum_{J \in D_\ell(I)} T(\chi,J)\right|^2.
\end{equation}
We note that the innermost sum contains at most two intervals $J$, since the dyadic decomposition of each $I$ contains at most two intervals on level $\ell$. Noting that $|a+b|^2 \ll |a|^2 + |b|^2$ holds for all complex numbers $a$ and $b$ and that each dyadic interval on level $\ell$ can occur in the decomposition of at most one $I \in \pi^*$, the quantity in (\ref{eq:LS2}) is
\[ \ll \sum_{q \leq Q} \frac{q}{\phi(q)}\; \sideset{}{^*} \sum_{\chi \mod q}\; \sum_{c=1}^{2^{k-\ell}} \left|T(\chi,I_{c,\ell})\right|^2.\]
Now the innermost sum is simply over the set of dyadic intervals on level $\ell$. Reordering the finite sums, this is
\[ = \sum_{c=1}^{2^{k-\ell}} \sum_{q \leq Q} \frac{q}{\phi(q)}\; \sideset{}{^*} \sum_{\chi \mod q} \left|T(\chi,I_{c,\ell})\right|^2.\]

For each $c$, we can apply Theorem \ref{thm:standardLargeSieve} to obtain:
\[\sum_{q \leq Q} \frac{q}{\phi(q)}\; \sideset{}{^*} \sum_{\chi \mod q} \left|T(\chi,I_{c,\ell})\right|^2 \ll (2^{\ell}+Q^2)\sum_{n \in I_{c,\ell}} |a_n|^2.\]
Thus, the quantity in (\ref{eq:LS2}) is $\ll (2^{\ell} + Q^2) \sum_{n=M+1}^{M+2^k}|a_n|^2.$

Substituting this back into (\ref{eq:LS1}), we see that the square root of the left hand side of (\ref{eq:VarLargeSieve}) is
\[\ll \sum_{\ell=0}^k \sqrt{\left(2^\ell + Q^2\right) \sum_{n=M+1}^{M+2^k}|a_n|^2}.\]
Hence, the left hand side of (\ref{eq:VarLargeSieve}) is
\[\ll \left(\sum_{\ell=0}^k \sqrt{2^\ell + Q^2}\right)^2 \sum_{n=M+1}^{M+2^k} |a_n|^2.\]

Recalling that $2^k = N$ and loosely bounding $2^\ell + Q^2 \leq 2^k + Q^2$ for all $\ell$, we see this is
\[\ll \left(k\sqrt{2^k + Q^2}\right)^2 \sum_{n=M+1}^{M+2^k} |a_n|^2 = k^2 (2^k + Q^2) \sum_{n=M+1}^{M+2^k} |a_n|^2 \ll \log^2(N)(N + Q^2) \sum_{n=M+1}^{M+N} |a_n|^2.\]
\end{proof}

We note that this refines Uchiyama's Maximal large sieve inequality \cite{U}, which states:

\begin{lemma}\label{lem:LargeSieveU} For all positive integers $Q,M,N$ and complex numbers $\{a_n\}_{n=M+1}^{M+N}$,
\begin{equation}
\sum_{q \leq Q} \frac{q}{\phi(q)} \sideset{}{^*}\sum_{\chi \mod q} \max_{I \subseteq [N]} |T(\chi,I)|^2 \ll \log^2(N)\left( N + Q^2\right) \sum_{n=M+1}^{N+M} |a_n|^2.
\end{equation}
\end{lemma}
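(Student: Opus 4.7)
The plan is to mirror the dyadic-decomposition argument used in the proof of Theorem \ref{thm:VarBHD}, but applied directly to the character sums $T(\chi, I)$ rather than to $\theta(I;q,a) - |I|/\phi(q)$. This allows us to reduce the variational version of the large sieve to its standard (non-variational) form of Theorem \ref{thm:standardLargeSieve} at the cost of only a $\log^2(N)$ factor.

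First, I would assume without loss of generality that $N = 2^k$ (the general case absorbs into the implicit constant by rounding up), and decompose $[M+1, M+N]$ into dyadic intervals $I_{c,\ell}$ at each level $\ell = 0, 1, \ldots, k$. For each primitive character $\chi$ mod $q$, let $\pi^*_{q,\chi}$ be a partition achieving the maximum in the inner variational quantity. By Lemma \ref{lem:binarydecomp}, each $I \in \pi^*_{q,\chi}$ admits a dyadic decomposition $D(I)$ with at most two intervals $D_\ell(I)$ on each level $\ell$. Writing $T(\chi, I) = \sum_{\ell=0}^k \sum_{J \in D_\ell(I)} T(\chi, J)$ and applying the triangle inequality for the $\ell^2$ norm on the outer sum over $q$, $\chi$, and $I$, I would obtain
\[
\sqrt{ \text{LHS of (\ref{eq:VarLargeSieve})}} \leq \sum_{\ell = 0}^k \sqrt{ \sum_{q \leq Q} \frac{q}{\phi(q)}\; \sideset{}{^*}\sum_{\chi \mod q}\; \sum_{I \in \pi^*_{q,\chi}} \Bigl| \sum_{J \in D_\ell(I)} T(\chi, J)\Bigr|^2 }.
\]

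Next, for a fixed level $\ell$, the inner sum on the right has at most two terms, so $|a+b|^2 \ll |a|^2 + |b|^2$ reduces it to a sum over single dyadic intervals. Crucially, since $\pi^*_{q,\chi}$ is a partition, each dyadic interval $I_{c,\ell}$ on level $\ell$ belongs to $D_\ell(I)$ for at most one $I \in \pi^*_{q,\chi}$, so the inner expression is bounded by $\sum_{c=1}^{2^{k-\ell}} |T(\chi, I_{c,\ell})|^2$ independent of the choice of partition. Interchanging the finite sums and applying the standard large sieve inequality (Theorem \ref{thm:standardLargeSieve}) separately for each $c$, I get
\[
\sum_{q \leq Q} \frac{q}{\phi(q)} \sideset{}{^*}\sum_{\chi \mod q} \sum_{c=1}^{2^{k-\ell}} |T(\chi, I_{c,\ell})|^2 \ll \sum_{c=1}^{2^{k-\ell}} (2^\ell + Q^2) \sum_{n \in I_{c,\ell}} |a_n|^2 = (2^\ell + Q^2) \sum_{n=M+1}^{M+N} |a_n|^2.
\]

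Summing over $\ell$ gives $\sqrt{\text{LHS}} \ll \sum_{\ell=0}^k \sqrt{2^\ell + Q^2}\, \|a\|_2 \ll k\sqrt{N + Q^2}\, \|a\|_2$, and squaring yields the claimed bound $\ll \log^2(N)(N + Q^2)\sum_n |a_n|^2$. The main thing to be careful about is the interchange of the maximum over partitions with the outer sums: moving the $\max_\pi$ inside each character-index requires that $\pi^*_{q,\chi}$ is allowed to depend on both $q$ and $\chi$, which it does, and the dyadic decomposition argument handles each $\pi^*_{q,\chi}$ uniformly. No substantial new ideas beyond those already in the proof of Theorem \ref{thm:VarBHD} are required; this is essentially that argument stripped of the Siegel--Walfisz step, since here we do not need to separate principal versus non-principal characters.
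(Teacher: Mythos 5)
Your proof is correct; in fact you prove the stronger variational Lemma~\ref{lem:VarLargeSieve}, since $\max_{I\subseteq[N]}|T(\chi,I)|^2 \le \max_{\pi\in\mathcal{P}_{M,N}}\sum_{I\in\pi}|T(\chi,I)|^2$ (put the single maximizing interval into a partition), and your argument is essentially verbatim the paper's own proof of Lemma~\ref{lem:VarLargeSieve}. Be aware, though, that the paper does not actually prove Lemma~\ref{lem:LargeSieveU}: this is Uchiyama's maximal large sieve, cited to \cite{U}, and the paper's contribution in that section is precisely the refinement to the partition-variational form. So what you have written is a correct proof of the statement (via a stronger one), but it is the paper's Lemma~\ref{lem:VarLargeSieve} argument rather than an independent treatment of Uchiyama's lemma, which could be done slightly more directly by applying Cauchy--Schwarz to the $O(\log N)$ dyadic pieces of the single maximizing interval.
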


Montgomery \cite{M} has asked if the $\log^2(N)$ can be removed. We do not have an answer to this question (though we obtain a lower bound on a related quantity in \cite{LL2}). We note that the $\log^2(N)$ cannot be completely removed in our variational refinement.

To see this, we use the lemma below, which follows easily from Lemma 22 in \cite{LL} and the Cauchy-Schwarz inequality:
\begin{lemma}\label{lem:old} Let $c_1,\ldots, c_N$ denote complex numbers $|c_i| \geq \delta$, for some $\delta>0$. Let $X_1,\ldots , X_N$ denote independent Gaussian random variables each with mean $0$ and variance $1$. Then
\[\mathbb{E} \left[  \sup_{\pi \in \mathcal{P}_{N}} \sum_{I \in \pi } \left| \sum_{n \in I } c_n X_n \right|^2  \right] \gg \delta^2 N \log \log (N) \]
\end{lemma}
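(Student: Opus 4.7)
The plan is to deduce Lemma \ref{lem:old} from Lemma 22 of \cite{LL}, which I take to provide the analogous unweighted lower bound
\[\mathbb{E}\left[\sup_{\pi \in \mathcal{P}_N} \sum_{I \in \pi}\left|\sum_{n \in I} X_n\right|^2\right] \gg N \log\log(N)\]
for iid standard real Gaussians. My task is to insert the complex weights $c_n$ into this framework and extract a factor of $\delta^2$ from the hypothesis $|c_n| \geq \delta$.

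First I would split $c_n = a_n + i b_n$; expanding the modulus, $|\sum_{n \in I} c_n X_n|^2 = (\sum_{n \in I} a_n X_n)^2 + (\sum_{n \in I} b_n X_n)^2$, so the $V^2$-variation is bounded below by the $V^2$-variation using either the $a_n$ or the $b_n$ alone. The elementary Cauchy-Schwarz bound $a_n^2 + b_n^2 \geq \tfrac{1}{2}(|a_n|+|b_n|)^2$ implies that for each $n$ at least one of $|a_n|, |b_n|$ exceeds $\delta/\sqrt{2}$. Pigeonholing yields an index set $S \subseteq [N]$ with $|S| \geq N/2$ on which (say) $|a_n| \geq \delta/\sqrt{2}$. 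Restricting the partition supremum to partitions that place each index outside $S$ into a singleton (whose non-negative contributions can be dropped), and using the Gaussian sign symmetry $X_n \sim -X_n$ to replace $|a_n|$ by $a_n$, it suffices to show
\[\mathbb{E}\left[\sup_{\pi \in \mathcal{P}_{|S|}} \sum_{I \in \pi}\left(\sum_{n \in I} a_n X_n\right)^2\right] \gg \delta^2 |S| \log\log(|S|),\]
with $a_n \geq \delta/\sqrt{2}$ throughout.

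Writing $a_n = (\delta/\sqrt{2}) \sigma_n$ with $\sigma_n \geq 1$ and pulling out the scalar $\delta^2/2$, it remains to show $\mathbb{E}[\sup_\pi \sum_I (\sum_{n \in I} \sigma_n X_n)^2] \gg |S| \log\log |S|$. Here I use that the joint law of $(\sigma_n X_n)_{n \in S}$ agrees with that of $(X_n + Z_n)_{n \in S}$, where $Z_n \sim \mathcal{N}(0, \sigma_n^2 - 1)$ are taken independent of the $X_n$. Expanding the inner square and taking conditional expectation over $Z$ makes the cross term vanish, giving
\[\mathbb{E}_Z\left[\sum_{I \in \pi}\left(\sum_{n \in I} (X_n + Z_n)\right)^2 \;\middle|\; X\right] = \sum_{I \in \pi}\left(\sum_{n \in I} X_n\right)^2 + \sum_{n \in S} (\sigma_n^2 - 1),\]
where the added constant is independent of $\pi$. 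Applying Jensen ($\mathbb{E}_Z \sup_\pi \geq \sup_\pi \mathbb{E}_Z$) and then $\mathbb{E}_X$ delivers $\mathbb{E}[\sup_\pi \sum_I (\sum_{n \in I} \sigma_n X_n)^2] \geq \mathbb{E}[\sup_\pi \sum_I (\sum_{n \in I} X_n)^2] \gg |S|\log\log(|S|)$ by Lemma 22 of \cite{LL}, producing the claimed $\gg \delta^2 N \log\log N$.

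I do not anticipate a significant obstacle; the reduction is elementary, combining pigeonholing on real versus imaginary parts of $c_n$, Gaussian sign symmetry, and a conditional-Jensen argument to handle variable variances, with Cauchy-Schwarz entering in the first complex-to-real step. The one place requiring minor care is verifying that the dropped singleton contributions outside $S$ do not reverse the inequality, which is immediate from their non-negativity.
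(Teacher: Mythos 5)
Your Jensen-type argument for inserting real weights $\sigma_n \geq 1$ is clean and correct, but the pigeonhole step that precedes it does not go through. After passing to the set $S = \{n : |a_n| \geq \delta/\sqrt{2}\}$ (with, say, $|S| \geq N/2$), you restrict the supremum over $\mathcal{P}_N$ to partitions that make every $n \notin S$ a singleton. After dropping those singleton contributions, what remains is a supremum over partitions of the \emph{maximal runs} $J_1, \ldots, J_m$ of consecutive integers contained in $S$, not a supremum over all of $\mathcal{P}_{|S|}$: there is no $\pi \in \mathcal{P}_N$ whose restriction realizes an interval of $S$ that is not a run of consecutive integers in $[N]$. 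Consequently the best bound this route yields from Lemma 22 of \cite{LL} is $\gg \delta^2 \sum_i |J_i| \log\log |J_i|$, which is \emph{not} $\gg \delta^2 |S| \log\log |S|$ in general. Concretely, take $c_n = \delta$ for $n$ even and $c_n = i\delta$ for $n$ odd; then $S$ is the set of even indices, every maximal run $J_i$ is a singleton, and $\sum_i |J_i|\log\log|J_i|$ collapses, even though the conclusion of the lemma still holds for this example. So the step ``it suffices to show $\mathbb{E}[\sup_{\pi\in\mathcal{P}_{|S|}}\cdots] \gg \delta^2 |S|\log\log|S|$'' does not follow from what precedes it.

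There is also a mismatch with the cited input. The paper's remark ``Strictly speaking, the lemma in \cite{LL} is stated for real $c_1,\ldots,c_N$'' indicates that Lemma 22 of \cite{LL} already carries the weights $|c_n|\geq\delta$ over the reals, so the paper's appeal to Cauchy--Schwarz is only the complex-to-real reduction, not the insertion of weights starting from the unweighted $c_n\equiv 1$ case as you assumed. One correct reduction that avoids the pigeonhole entirely: let $Y_1,\ldots,Y_N$ be an independent copy of the $X_n$ and set $Z_n := a_n X_n + b_n Y_n$, a real Gaussian with variance $|c_n|^2 \geq \delta^2$. Since $(u+v)^2 \leq 2u^2 + 2v^2$, and $\bigl(\sum_{n\in I} a_n X_n\bigr)^2 \leq \bigl|\sum_{n\in I} c_n X_n\bigr|^2$ with the analogous bound for $b_n$ and $Y$, one obtains
\[
\mathbb{E}\Bigl[\sup_\pi\sum_{I\in\pi}\Bigl(\sum_{n\in I} Z_n\Bigr)^2\Bigr] \leq 4\,\mathbb{E}\Bigl[\sup_\pi\sum_{I\in\pi}\Bigl|\sum_{n\in I} c_n X_n\Bigr|^2\Bigr],
\]
and the left-hand side is exactly the real, weighted quantity (with $Z_n \sim |c_n| X_n'$), to which the real lemma or your Jensen trick applies directly to give $\gg \delta^2 N\log\log N$.
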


Strictly speaking, the lemma in \cite{LL} is stated for real $c_1,\ldots,c_N$, but it can easily be deduced for complex numbers by splitting into real and imaginary parts. Now we consider (\ref{lem:VarLargeSieve}) with interval $[N]$ and $a_n=X_n$ for each $n \in [N]$. To apply Lemma \ref{lem:old} for each $q$ and each character modulo $q$, we consider only the indices $n$ such that $n$ is coprime to $q$. On these values, the character will be nonzero. We let $C(q)$ denote the number of these indices for each $q$.
Then, from Lemma \ref{lem:old}, the expectation of the left hand side can be estimated by
\begin{equation}\label{eq:firstestimate}
 \gg \sum_{q \leq Q} q \cdot C(q) \cdot \log \log (C(q)).
\end{equation}

Now, we note that $\sum_{q\leq Q} C(q)$ is equal to the number of pairs $(n,q)$ such that $n$ and $q$ are coprime. This is also equal to $\sum_{n=1}^N Q \frac{\phi(n)}{n}$. By Theorem 330 in \cite{HW}, $\sum_{n=1}^N \phi(n) = \frac{3}{\pi^2} N^2 + O(N \log(N))$, which implies $\sum_{n=1}^N \frac{\phi(n)}{n} \gg N$. Thus, $\sum_{q \leq Q} C(q) \gg QN$. It follows that there exist positive constants $\epsilon, \delta$ such that $C(q) \geq \epsilon N$ for at least $\delta Q$ values of $q$.
Hence, the quantity in (\ref{eq:firstestimate}) is $\gg Q^2N \log\log(N)$, while the expectation of the sum of squares of the $a_n=X_n$ will be $\ll N$. Thus, at least when $N \ll Q^2$, one needs at least a factor of $\log\log(N)$ in (\ref{lem:VarLargeSieve}). Refining the gap between $\log\log(N)$ and $\log^2(N)$ would be interesting.

\section{Acknowledgements}
We thank Jeffrey Vaaler for useful discussions.

\texttt{A. Lewko, Department of Computer Science, The University of Texas at Austin}

\textit{alewko@cs.utexas.edu}
\vspace*{0.5cm}

\texttt{M. Lewko, Department of Mathematics, The University of Texas at Austin}

\textit{mlewko@math.utexas.edu}

\end{document}